\newtheorem{theorem}{Theorem}[section]
\newtheorem{lemma}[theorem]{Lemma}
\newtheorem{proposition}[theorem]{Proposition}
\theoremstyle{definition}
\newtheorem{definition}[theorem]{Definition}
\theoremstyle{remark}
\newtheorem{remark}[theorem]{Remark}
\numberwithin{equation}{section}
\newcommand{\qform}[1]{{\left\langle{#1}\right\rangle}}
\DeclareMathOperator{\End}{End}
\DeclareMathOperator{\Nrd}{Nrd}
\DeclareMathOperator{\res}{res}
\DeclareMathOperator{\cor}{cor}
\DeclareMathOperator{\point}{point}
\DeclareMathOperator{\et}{et}
\DeclareMathOperator{\fppf}{fppf}
\begin{document}

\title{Zero Cycles of Degree One on Principal Homogeneous Spaces}

\author{Jodi Black}

\address{Department of Mathematics and Computer Science, Emory University, Atlanta, Georgia, 30322}

\email{jablack@learnlink.emory.edu}

\thanks{The results in this work are from a doctoral dissertation in progress under the direction of R. Parimala. The author wishes to thank her for her encouragement and support without which this work would certainly not have been possible. Thanks also to Skip Garibaldi and Jean-Pierre Tignol for providing thoughtful comments on earlier drafts of this work which improved it considerably. Finally, the author wishes to thank Eva Bayer and EPFL for hospitality provided during the later stages of the preparation of this work.}

\date{July 1, 2010}

\begin{abstract}
Let $k$ be a field of characteristic different from 2. Let $G$ be a simply connected or adjoint semisimple algebraic $k$-group which does not contain a simple factor of type $E_8$ and such that every exceptional simple factor of type other than $G_2$ is quasisplit. We show that if a principal homogeneous space under $G$ over $k$ admits a zero cycle of degree 1 then it has a $k$-rational point.

\end{abstract}

\maketitle

\section*{Introduction}

Let $k$ be a field and let $G$ be an absolutely simple algebraic group defined over $k$. Let $S(G)$ be the set of homological torsion primes of $G$ defined by Serre \cite{Serre}. 

\begin{definition}
We say that a number $d$ is \emph{coprime to $S(G)$} if none of its prime factors is contained in $S(G)$. 
\end{definition}

The following question of Serre \cite[pg 233]{Serre} is open in general.

\bigskip

\begin{description}
\item[Q] Let $k$ be a field and let $G$ be an absolutely simple $k$-group. Let $\{L_i\}_{1 \leq i \leq m}$ be a set of finite field extensions of $k$ and let the greatest common divisor of the degrees of the extensions $[L_i:k]$ be d. If $d$ is coprime to $S(G)$, does the canonical map 
\[
H^1(k,G) \to \displaystyle \prod_{i=1}^m H^1(L_i,G)
\]
have trivial kernel?
\end{description} 

\bigskip

The above question has great implications. For instance, a positive answer for exceptional groups would lead to the solution of Serre's Conjecture II  \cite[Chapter III, \S 3.1]{SerreGC} for these groups, which is still open. Zinovy Reichstein \cite[Section 5]{Zinovy} has distinguished between \emph{Type 1} and \emph{Type 2} problems in Galois Cohomology. The former type can be conveniently handled with current methods while the latter poses greater difficulties. A positive answer to {\bf Q} would reduce the Type 2 problem of finding points on principal homogeneous spaces over a general field to the Type 1 problem of finding points over fields with absolute Galois group a pro-$p$ group. 

 The first major result in this direction for a general field $k$ is due to Bayer-Lenstra \cite[Section 2]{BayerLenstra} for groups of isometries of algebras with involution. Our approach in this paper is to build on the theorem of Bayer-Lenstra to prove the following:

\begin{theorem} \label{intrometa}
Let $k$ be a field of characteristic different from 2. Let $\{L_i\}_{1 \leq i \leq m}$ be a set of finite field extensions of $k$ and let $\gcd([L_i:k])=d$. Let $G$ be a simply connected or adjoint semisimple algebraic $k$-group which does not contain a simple factor of type $E_8$ and such that every exceptional simple factor of type other than $G_2$ is quasisplit. If $d$ is coprime to $S(G)$, then the canonical map
\[
H^1(k,G) \to \prod_{i=1}^m H^1(L_i,G)
\]
has trivial kernel.
\end{theorem}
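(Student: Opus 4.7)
The plan proceeds in three stages: reduce to the absolutely simple case, dispose of classical types via the Bayer--Lenstra theorem, and handle each admissible exceptional type by its cohomological invariants. For the reduction, I would write $G \cong \prod_j R_{k_j/k} G'_j$ with each $G'_j$ absolutely simple over a finite separable extension $k_j/k$; Shapiro's lemma gives $H^1(k, R_{k_j/k} G'_j) \cong H^1(k_j, G'_j)$, and the $k_j$-algebras $L_i \otimes_k k_j$ split into finite $k_j$-extensions whose degree gcd still divides $d$, hence remains coprime to $S(G'_j) \subseteq S(G)$. Thus it suffices to treat $G$ absolutely simple, simply connected or adjoint. Throughout the argument the basic tool is the restriction--corestriction remark: since $\gcd\{[L_i:k]\}=d$, any class $\alpha \in H^n(k, M)$ killed by each $\res_{L_i}$ satisfies $d\alpha = \sum_i a_i \cor_{L_i/k}\res_{L_i}(\alpha) = 0$ for suitable integers $a_i$, and so vanishes whenever $d$ is coprime to its torsion.

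For classical types $A_n, B_n, C_n, D_n$ (non-trialitarian), $H^1(k,G)$ is described by central simple algebras with involution together with small auxiliary 2-torsion data. The Bayer--Lenstra theorem asserts precisely that two such algebras becoming isomorphic over a family of extensions with odd gcd of degrees are isomorphic over $k$, so applies directly when $S(G)=\{2\}$; passage between simply connected and adjoint forms uses the central isogeny sequence $1 \to Z \to G^{sc} \to G^{ad} \to 1$, whose connecting invariant in $H^2(k, Z)$ is controlled by the corestriction remark.

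For the admissible exceptional types I proceed case by case. For $G_2$: $H^1(k, G_2)$ classifies $3$-fold Pfister forms, and the odd-gcd hypothesis combined with Springer's odd-degree theorem and the Arason--Pfister Hauptsatz forces hyperbolicity over $k$. For $F_4$ (split by the quasisplit hypothesis): an Albert algebra is split iff its Serre--Rost invariants $f_3, f_5 \in H^{*}(k, \mathbb{Z}/2)$ and $g_3 \in H^3(k, \mathbb{Z}/3)$ all vanish, and the gcd coprime to $\{2,3\}$ hypothesis kills each via the corestriction remark. For quasisplit simply connected $E_6, E_7$: the known injectivity of the Rost invariant for quasisplit exceptional groups (following Garibaldi) reduces the problem to killing a class in $H^3(k, \mathbb{Q}/\mathbb{Z}(2))$, which again succumbs to restriction--corestriction since $d$ is coprime to $\{2,3\}$. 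Adjoint quasisplit $E_6, E_7$ reduce to the simply connected case after separately controlling the connecting invariant in $H^2(k, Z)$, where $|Z| \in \{2,3\}$.

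The main obstacle I anticipate is the $F_4$ case, where three independent invariants with distinct torsions must simultaneously be shown to vanish using only the single gcd condition rather than extensions tailored to each prime; the parallel mod-3 corestriction argument is then needed again for adjoint $E_6$. A second technical point is the bridge between adjoint and simply connected forms in the exceptional cases: after killing the $H^2(k, Z)$ obstruction one has only that a lift $\tilde\xi \in H^1(k, G^{sc})$ restricts into the image of $H^1(L_i, Z) \to H^1(L_i, G^{sc})$, not necessarily to zero, and recovering $\tilde\xi = 0$ over $k$ requires a twisting argument that reduces the question to one about a possibly different inner form of $G^{sc}$ for which the Rost-invariant injectivity must still apply.
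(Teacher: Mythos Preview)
Your reduction to the absolutely simple case and your treatment of the exceptional types are essentially the paper's arguments. However, your handling of the classical simply connected groups has a genuine gap.

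The Bayer--Lenstra theorem applies to \emph{isometry} groups $O(A,\sigma)$, $Sp(A,\sigma)$, $U(A,\sigma)$: it says that rank-one hermitian forms trivial over an odd-degree extension are trivial over $k$. Among the simply connected classical groups only $Sp(A,\sigma)$ is itself an isometry group. For $G=SL_1(A)$ there is no involution at all, and $H^1(k,G)\cong k^*/\Nrd(A^*)$; for $G=SU(A,\sigma)$ and $G=\mathit{Spin}(A,\sigma)$ one must pass from the isometry group up along
\[
1\to SU\to U\to R^1_{K/k}\mathbb{G}_m\to 1,\qquad 1\to \mu_2\to \mathit{Spin}\to O^+\to 1.
\]
After Bayer--Lenstra identifies the image in $H^1$ of the middle group as trivial, one is left with an element $\lambda'$ of the abelian kernel, and the problem is to show that $\lambda'$ lies in the image of the connecting map from $k$-points. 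Your ``restriction--corestriction remark'' works only in abelian cohomology: it tells you that $d\cdot\lambda'$ dies, but it does \emph{not} tell you that $\lambda'\in k^*$ is a reduced norm, or a spinor norm, or that $\lambda'\in R^1_{K/k}\mathbb{G}_m(k)$ is $\Nrd$ of a unitary element. What is needed here, and what the paper invokes explicitly, is the Gille--Merkurjev norm principle: for a rational group the connecting map commutes with $N_{L/k}$ on the level of images, so that $N_{L/k}(\delta(O^+(L)))\subset \delta(O^+(k))$ and similarly for $\Nrd$ on $U$. Combined with the gcd argument and, in type $A$, with Lemma~\ref{index} (that $\alpha^s\in\Nrd(A^*)$ for $s=\mathrm{ind}(A)$), this closes the argument. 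Without the norm principle your sketch does not prove the $SL_1$, $SU$, or $\mathit{Spin}$ cases.

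Two smaller remarks. For classical adjoint groups the paper does not reduce to the simply connected case via the isogeny sequence (your anticipated twisting difficulty is real there); it argues directly using Scharlau transfer and Brauer-group bookkeeping. And for the exceptional adjoint cases the twisting you worry about is unnecessary: since $G^{sc}$ is quasisplit, the map $H^1(k,\mu)\to H^1(k,G^{sc})$ factors through $H^1(k,T)=0$ for a quasitrivial maximal torus $T\supset\mu$, so $H^1(k,G^{sc})\hookrightarrow H^1(k,G)$ already injects and no twist is needed.
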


Notable consequences of this result include the following:

\begin{theorem} \label{PHS}
Let $k$ be a field of characteristic different from 2 and let $G$ be a simply connected or adjoint semisimple algebraic $k$-group which does not contain a simple factor of type $E_8$ and such that every exceptional simple factor of type other than $G_2$ is quasisplit.. Let $X$ be a principal homogeneous space under $G$ over $k$. If $X$ admits a zero cycle of degree one then $X$ has a rational point.  
\end{theorem}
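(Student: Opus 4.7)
The plan is to deduce Theorem \ref{PHS} as a direct corollary of Theorem \ref{intrometa} via the standard dictionary between principal homogeneous spaces and $H^1$. First I would let $\xi \in H^1(k,G)$ denote the cohomology class corresponding to $X$, so that $X$ has a $k$-rational point precisely when $\xi$ is trivial, and, more generally, $X$ acquires an $L$-rational point for a field extension $L/k$ if and only if the restriction $\xi_L := \mathrm{res}_{L/k}(\xi)$ vanishes.

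Next, I would unwind the hypothesis that $X$ admits a zero cycle of degree one. Such a cycle is a finite $\mathbb{Z}$-linear combination $\sum_i n_i [x_i]$ of closed points $x_i \in X$ with $\sum_i n_i [k(x_i):k] = 1$. Setting $L_i := k(x_i)$, this identity forces $\gcd_i [L_i:k] = 1$, since that gcd divides each $[L_i:k]$ and hence the sum. Each closed point $x_i$ furnishes an $L_i$-rational point of $X$, so $\xi_{L_i}$ is trivial for every $i$; equivalently, $\xi$ lies in the kernel of the canonical map $H^1(k,G) \to \prod_{i=1}^m H^1(L_i,G)$.

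Finally, the hypotheses on $G$ in Theorem \ref{PHS} are exactly those of Theorem \ref{intrometa}, and $d = 1$ is vacuously coprime to $S(G)$ (having no prime factors at all). Invoking Theorem \ref{intrometa} with this data, the above kernel is trivial, so $\xi = 0$ and $X$ has a $k$-rational point. Since this deduction is essentially formal, there is no genuine obstacle in this step of the argument; all of the substantive difficulty sits in Theorem \ref{intrometa} itself, which is the main technical contribution of the paper and is presumably where the restrictions on the type of $G$ and the appeal to the Bayer--Lenstra theorem enter the analysis.
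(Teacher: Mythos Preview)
Your proposal is correct and matches the paper's approach: the paper simply notes that Theorem \ref{PHS} ``is just the case $d=1$'' of Theorem \ref{intrometa}, and your argument spells out precisely this reduction via the standard correspondence between principal homogeneous spaces and classes in $H^1(k,G)$.
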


This is just the case $d=1$ of \ref{intrometa} and gives a positive answer to a question posed by Serre \cite[pg 192]{SerreGC} for these groups. We remark that \ref{PHS} does not hold if $X$ is instead a quasi-projective homogeneous variety \cite{Florence} or a projective homogeneous variety \cite{Parihomog}.

Theorem \ref{intrometa} implies that the Rost invariant $R_G:H^1(k,G) \to H^3(k,\mathbb Z/3\mathbb Z)$ is injective for $G$ an absolutely simple, simply connected group of type $A_2$. In turn, one recovers a classification of unitary involutions on algebras of degree 3 \cite[Theorem 19.6]{KMRT} which had appeared in \cite{HKRT}.

Triviality of the kernel of the Rost invariant on torsors under a quasisplit, simply connected group of type trialitarian $D_4$, $E_6$ and $E_7$ \cite{Skip}, leads immediately to the proof of \ref{intrometa} in these cases. Springer's theorem \cite[Chapter VII, Theorem 2.7]{Lam} applied to norm forms of Cayley algebras and trace forms of Jordan algebras is crucial for the proof of \ref{intrometa} for a simple group of type $G_2$ and a split, simple group of type $F_4$ respectively. Also important for the $F_4$ case is a mod 3 invariant due to Rost \cite{Rostmod3}.

Besides the Bayer-Lenstra theorem \cite{BayerLenstra}, the Gille-Merkurjev norm principle theorems \cite[Th\'eor\`eme II.3.2]{GilleNP}, \cite[Theorem 3.9]{MerkurjevNP} are critical for our proof for simply connected classical groups. In the case of adjoint classical groups, we make regular use of Bayer-Lenstra's \cite{BayerLenstra} extension of Scharlau's transfer homomorphism to hermitian forms. 

As we wish to exploit Weil's classification of classical semisimple algebraic groups as groups associated to algebras with involution, we begin by recalling relevant notions from the theory of algebras with involution. In section \ref{SG} we give the classification of simply connected and adjoint absolutely simple groups and give values of $S(G)$ for each group $G$. The final preliminary section is section \ref{GC} where we recall the basic notions from Galois Cohomology which we will need in the remainder of the paper. In Section \ref{SC} we consider the question {\bf Q} for $G$ a simply connected, absolutely simple, classical group and in section \ref{AD} for $G$ an adjoint, absolutely simple, classical group. Section \ref{EX} is a discussion of the question {\bf Q} for $G$ quasisplit, simple exceptional of type other than $E_8$. Finally in Section \ref{ME} we show that our main result  \ref{intrometa} is an easy consequence of our results in the preceding three sections. 

\section{Algebras with Involution} 

Let $A$ be a central simple algebra over a field $K$ of characteristic different from 2. An \emph{involution} $\sigma$ on $A$ is an anti-automorphism of period 2. We will often write $(A, \sigma)$ for a central simple K-algebra $A$ with involution $\sigma$. Let $k$ denote the set of elements in $K$ fixed by $\sigma$. If $k = K$ we call $\sigma$ an \emph{involution of the first kind}. An involution of the first kind is called \emph{orthogonal} if it is a form for the transpose over $\bar k$ and \emph{symplectic} otherwise. If $k \neq K$ we call $\sigma$ an \emph{involution of the second kind}. In this case, $[K:k]=2$. An involution of the second kind will also be referred to as an \emph{involution of  unitary type}. 

A  \emph{similitude} of a central simple algebra with involution $(A,\sigma)$ is an element $a \in A$ such that $\sigma(a)a$ is in $k^*$. This element $\sigma(a)a$ is called the \emph{multiplier} of $a$ written $\mu(a)$. Following \cite{KMRT} we denote the group of similitudes of $(A, \sigma)$ by $GO(A, \sigma)$ if $\sigma$ is of orthogonal type, $GSp(A, \sigma)$ if $\sigma$ is of symplectic type and $GU(A, \sigma)$ if $\sigma$ is of unitary type. Let the quotients of these groups by their centers be denoted by $PGO(A, \sigma)$, $PGSp(A, \sigma)$ and $PGU(A, \sigma)$ respectively, and let them be referred to as the group of \emph{projective similitudes} of $(A, \sigma)$ in each case. The group of similitudes with multiplier 1 is called the group of \emph{isometries} of $(A, \sigma)$ and is denoted $O(A, \sigma)$, $Sp(A,\sigma)$ and $U(A, \sigma)$ in the cases $\sigma$ orthogonal, symplectic and unitary respectively. 

Let $SU(A, \sigma)$ be the elements in $U(A, \sigma)$ with trivial reduced norm. For $\sigma$ an orthogonal involution on a central simple $K$-algebra $A$ of even degree, let $GO^+(A, \sigma)$ denote the set of elements $a$ in $GO(A, \sigma)$ such that $\Nrd(a) = \mu(a)^{\text{deg}(A)/2}$ and $PGO^+(A, \sigma)$ be the quotient of $GO^+(A, \sigma)$ by its center. Let $GO^-(A, \sigma)$ be the coset of $GO^+(A, \sigma)$ in $GO(A, \sigma)$ consisting of elements $a$ such that $\Nrd(a) = - \mu(a)^{\text{deg}(A)/2}$. We will call elements of $GO^+(A, \sigma)$ \emph{proper similitudes} and those of $GO^-(A, \sigma)$ \emph{improper similitudes}. For $(A, \sigma)$ an algebra of even degree with orthogonal involution, let $Spin(A, \sigma)$ be the subgroup of the Clifford group consisting of elements $g$ with $g \tilde \sigma(g) =1$ where $\tilde \sigma$ is the map on the Clifford group induced by $\sigma$. We also recall that for a $K$-algebra $A$, $SL_1(A)$ is the kernel of the reduced norm map on $GL_1(A)$ and $PGL_1(A)$ is the quotient of $GL_1(A)$ by its center. 

Given a central simple $K$-algebra $A$ with involution $\sigma$ and $K^{\sigma}=k$, a \emph{hermitian form} $h$ on a right $A$-module $V$  is a map $h:V \times V \to A$ such that for all $v,w \in V$ and $a,b \in A$, $h(va,wb) = \sigma(a)h(v,w)b$,  $h(v,w) = \sigma(h(w,v))$ and $h$ is bi-additive. We will also assume that all hermitian forms satisfy a non-degeneracy condition, that is to say, for all $v \in V-\{0\}$ there is a $w \in V-\{0\}$ such that $h(v,w) \neq 0$.

We associate to any hermitian form $h$ over an algebra with involution $(A, \sigma)$, the adjoint involution $\tau_h$ on the space of endomorphisms of $V$ over $A$. This association gives a bijective correspondence between hermitian forms on $V$ modulo factors in $k^*$ and involutions on $\End_A(V)$ whose restriction to $K$ is $\sigma$. Since by Wedderburn's theorem \cite[Theorem 2.1.3]{GilleSzam} we may write $A$ as $\End_D V$ for $V$ a vector space over a division algebra $D$, we may write any central simple algebra with involution $(A, \sigma)$ as $(\End_D V, \tau_{h})$ where $h$ is a hermitian form over $(D, \theta)$ and $\theta$ is an involution whose restriction to $K$ is $\sigma$.

If $a$ is an algebraic element over $k$, consider the $k$-linear map $s:k(a) \to k$given by $s(1)=1$ and $s(a^j) = 0$ for all $1 \leq j < m$ where $m = [k(a):k]$. The map $s$ induces a transfer homomorphism $s_*$ from the Witt group of hermitian forms over $(D_{k(a)}, \theta_{k(a)})$ to the Witt group of hermitian forms over $(D, \theta)$. We will refer to this homomorphism as Scharlau's transfer homomorphism. Bayer and Lenstra have shown \cite{BayerLenstra} that if $[k(a):k]$ is odd, $r^*$ is the extension of scalars from $k$ to $k(a)$, and $h$ is a hermitian form over $(D, \theta)$ then $s_*(r^*(h)) = h$ in $W(D, \theta)$. We may regard $W(D_{k(a)}, \theta_{k(a)})$ as a $W(k(a))$-module. For example, we may write $ar^*(h) \in W(D_{k(a)}, \theta_{k(a)})$ as $\qform{a} \otimes r^*(h) \in W(k(a)) \otimes W((D_{k(a)}, \theta_{k(a)})$. Bayer and Lenstra have shown \cite{BayerLenstra} that $s_*(\qform{a} \otimes r^*(h))= s_*(\qform{a}) \otimes h$. 

\section{Properties of Algebraic Groups} \label{SG}

Let $k$ be a field of characteristic different from 2. A simply connected (respectively adjoint), semisimple algebraic $k$-group $G$ is a product of groups of the form $R_{E_j/k}(G_j)$ where each $E_j$ is a finite, separable extension of $k$ and each $G_j$ is an absolutely simple, simply connected (respectively adjoint) group \cite[Theorem 26.8]{KMRT}. We call $G$ a \emph{classical} group if each $G_j$ is classical. 

An absolutely simple, simply connected, classical $k$-group $G$ has one of the following forms  \cite[26.A.]{KMRT}, \cite{BayerParimala}:

\begin{description}
\item [Type $^1A_{n-1}$] $G=SL_1(A)$ for a central simple algebra $A$ of degree $n$ over $k$.
\item [The unitary case] $G=SU(A, \sigma)$ associated to a central simple algebra $A$ over $K$ of degree $n$ at least 2, with $\sigma$ a unitary involution on $A$ with $K^\sigma = k$.
\item [The symplectic case] $G=Sp(A,\sigma)$ associated to a central simple algebra $A$ over $k$ of even degree with a symplectic involution $\sigma$.
\item [The orthogonal case] $G=Spin(A, \sigma)$ associated to a central simple algebra $A$ over $k$ of degree at least 3, with $\sigma$ an orthogonal involution on $A$.
\end{description}

Let $k$ be field of characteristic different from 2. An absolutely simple, adjoint, classical $k$-group $G$ has one of the following forms: \cite[26.A.]{KMRT}:

\begin{description}
\item [Type $^1A_{n-1}$] $G=PGL_1(A)$ for a central simple algebra $A$ of degree $n$ over $k$.
\item [The unitary case]  $G=PGU(A, \sigma)$ associated to $A$ is a central simple algebra over a field $K$ of degree $n$ at least 2 and $\sigma$ is a unitary involution on $A$ with $K^{\sigma}=k$.
\item [The symplectic case]  $G=PGSp(A,\sigma)$ associated to a central simple algebra $A$ over $k$ of even degree and $\sigma$ a symplectic involution on $A$.
\item [The orthogonal case] We distinguish between groups of type $B$ and $D$ in this case.

\begin{description}
\item[Type $B_n$] $G=O^+(A, \sigma)$ associated to a central simple algebra $A$ over $k$ of odd degree at least 3 and $\sigma$ an orthogonal involution on $A$.
\item[Type $D_n$] $G=PGO^+(A, \sigma)$ associated to a central simple algebra $A$ over $k$ of even degree at least 4 and $\sigma$ an orthogonal involution on $A$.
\end{description}
\end{description}

In the classification of semisimple algebraic groups, exceptional groups are precisely those of types $^{3,6}D_4 , E_6 , E_7 , E_8 , F_4$ and $G_2$ and a group of type $E_8$, $F_4$ or $G_2$ is both simply connected and adjoint. 

In \cite{Serre} Serre defines a set of primes $S(G)$ associated to an absolutely simple $k$-group $G$ which we will refer to as the \emph{homological torsion primes} of $G$. $S(G)$ is the set of prime numbers $p$ each of which satisfies one of the following conditions:
\begin{enumerate}
\item $p$ divides the order of the automorphism group of the Dynkin graph of $G$
\item $p$ divides the order of the center of the universal cover of $G$
\item $p$ is a torsion prime of the root system of $G$
\end{enumerate}

The values of the cohomological torsion primes for each of the absolutely simple semisimple groups is as follows:

\begin{center} \begin{tabular}{| c | c | } 
\hline Group &S(G) \\ \hline \hline
\hline Type $^1A_{n-1}$ & prime divisors of $n$ \\ \hline 
unitary case & 2, prime divisors of $n$\\ \hline 
symplectic case &2 \\ \hline
orthogonal case &2 \\ \hline
$G_2$ &2\\ \hline
$F_4$ &2,3\\ \hline
$^{3,6}D_4$, $E_6$, $E_7$ & 2,3 \\\hline 
$E_8$ & 2,3,5\\\hline
\end{tabular} 
\end{center}

\bigskip

We mention that for each absolutely simple group $G$, the prime factors of the Dynkin index of $G$ are contained in $S(G)$ \cite{SkipMerkSerre}. 

\section{Galois Cohomology of Algebraic Groups} \label{GC}

Let $k$ be a field. For an algebraic $k$-group $G$, let $H^i(k, G) = H^i(\Gamma_k, G(k_s))$ denote the Galois Cohomology of $G$ with the assumption $i \leq 1$ if $G$ is not abelian. For any $k$-group $G$, $H^0(k, G)=G(k)$ and $H^1(k,G)$ is a pointed set which classifies the isomorphism classes of principal homogeneous spaces under $G$ over $k$. The point in $H^1(k,G)$ corresponds to the principal homogeneous space with rational point. We will interchangeably denote the point in $H^1(k,G)$ by \emph{point} or 1. 

For $\text{Gal}(k_s/L)$ a subgroup of $\text{Gal}(k_s/k)$ of finite index $n$, we have a restriction homomorphism $H^1(k,G) \to H^1(L,G)$. If $G$ is abelian, we also have a corestriction homomorphism $H^1(L, G) \to H^1(k, G)$. The composite $\cor \circ \res$ is multiplication by $n$. In particular, the restriction map is injective on the (prime-to-$n$)-torsion part of $H^1(k,G)$. 

Each $\Gamma_k$-homomorphism $f:G \to G^{\prime}$ induces a canonical map $H^i(k, G) \to H^i(k, G^{\prime})$ which we shall also denote by $f$. Given an exact sequence of $k$-groups,
\[
\xymatrix{
1 \ar[r] &G_1 \ar[r]^-{f_1} &G_2 \ar[r]^-{f_2} &G_3 \to 1
}
\]

\bigskip

\noindent
there exists a map of pointed sets $\delta_0:G_3(k) \to H^1(k, G_1)$ such that the following sequence of pointed sets is exact

\[
\xymatrix{
G_1(k) \ar[r]^-{f_1} &G_2(k) \ar[r]^-{f_2} &G_3(k) \ar[r]^-{\delta_0} &H^1(k,G_1) \ar[r]^-{f_1} &H^1(k,G_2) \ar[r]^-{f_2} &H^1(k,G_3)}
\]

\bigskip

\noindent
If in addition $G_1$ is central in $G_2$, there is a connecting map $\delta_1:H^1(k, G_3) \to H^2(k, G_1)$ such that 

\[
\xymatrix{
G_3(k) \ar[r]^-{\delta_0} &H^1(k,G_1) \ar[r]^-{f_1} &H^1(k,G_2) \ar[r]^-{f_2} &H^1(k,G_3) \ar[r]^-{\delta_1} &H^2(k,G_1)
}
\]

\noindent
is an exact sequence of pointed sets.

To conclude this section we list some well-known results on the Galois Cohomology of certain groups $G$ which we shall use often. 

\bigskip

\begin{description}
\item [Hilbert's Theorem 90] For a separable, associative $k$-algebra $R$, let $GL_1(R)$ be the algebraic group whose group of $A$-points, for any commutative $k$-algebra $A$ is $(A \otimes_k R)^*$. For example, $GL_1(k)$ is just the multiplicative group $G_m$. Then $H^1(k, GL_1(R)) = 1$. The \emph{classical Hilbert 90} states that if $L/k$ is a cyclic Galois extension of fields with Galois group generated by $\theta$ then any element $\alpha$ of $L^*$ with $N_{L/k}(\alpha) =1$ is of the form $\mu^{-1} \theta(\mu)$ for some $\mu \in L^*$. This classical result follows from the more general statement preceding it.
\item [Cohomological Brauer group] $H^2(k, G_m)$ is the Brauer group of $k$ and for $\mu_n$ the group of $n$-th roots of unity, $H^2(k, \mu_n)$ is the $n$-torsion subgroup of the Brauer group of $k$.
\item [Kummer theory] $H^1(k, \mu_n) = k^*/(k^*)^n$.
\item [Shapiro's Lemma]Let $G$ be an algebraic $k$-group and let $L/k$ be a finite extension of groups. Let $R_{L/k}(G)$ denote the Weil restriction of $G$ given by $R_{L/k}(G)(A)=G(A \otimes_k L)$ for any commutative $k$-algebra $A$. Then $H^i(k,R_{L/k}(G)) = H^i(L,G)$.
\end{description}

 \section{Absolutely Simple Simply Connected Groups of Classical Type} \label{SC}

The main result of this section is the following:

\begin{theorem}\label{scmeta} Let $k$ be a field of characteristic different from 2. Let $G$ be an absolutely simple, simply connected, classical algebraic group over $k$. Let $\{L_i\}_{1 \leq i \leq m}$ be a set of finite field extensions of $k$ and let the greatest common divisor of the degrees of the extensions$[L_i:k]$ be $d$. If $d$ is coprime to $S(G)$, then the canonical map
\[
H^1(k,G) \to \prod_{i=1}^m H^1(L_i,G)
\]
has trivial kernel.
\end{theorem}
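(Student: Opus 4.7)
The plan is to argue case-by-case along the classification of simply connected absolutely simple classical groups recorded in Section~\ref{SG}: Type $^1A_{n-1}$, unitary, symplectic, and orthogonal. In each non-trivial case the strategy is to fit $G$ into a short exact sequence $1 \to G \to H \to Q \to 1$ with $H$ a unitary, linear, or full orthogonal group and $Q$ either a one-dimensional torus or $\mu_2$. The associated long exact sequence in Galois cohomology then splits the kernel-triviality problem into two halves: the image of a kernel class $\xi$ in $H^1(k,H)$ is a hermitian form that must be shown to be hyperbolic after odd-degree descent, handled by Bayer-Lenstra; and the preimage in $Q$ or its cohomology must be shown to vanish, handled by the Gille-Merkurjev norm principle. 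The hypothesis that $d$ is coprime to $S(G)$ furnishes both inputs: the prime $2 \in S(G)$ (when present) forces $d$ odd so Bayer-Lenstra applies, and the divisors of $n$ in $S(G)$ (when present) provide the Bezout exponent bound after the norm step.

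The symplectic case is immediate because hermitian forms over $(A,\sigma)$ with $\sigma$ symplectic are classified up to isometry by rank, so $H^1(k, Sp(A,\sigma)) = 1$. For Type $^1A_{n-1}$ with $G = SL_1(A)$, Hilbert's Theorem~90 identifies $H^1(k, SL_1(A))$ with $k^*/\Nrd(A^*)$. A kernel class $\alpha \in k^*$ is a reduced norm over each $L_i$; applying $N_{L_i/k}$ yields $\alpha^{[L_i:k]} \in \Nrd(A^*)$ for every $i$, hence $\alpha^d \in \Nrd(A^*)$. Since $a\cdot 1_A$ has reduced norm $a^n$, we have $(k^*)^n \subseteq \Nrd(A^*)$, so $k^*/\Nrd(A^*)$ has exponent dividing $n$, and $\gcd(d,n)=1$ combined with Bezout gives $\alpha \in \Nrd(A^*)$.

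For the unitary case $G = SU(A,\sigma)$ with $K^\sigma = k$, the reduced-norm sequence $1 \to SU(A,\sigma) \to U(A,\sigma) \xrightarrow{\Nrd} K^{(1)} \to 1$ (with $K^{(1)}$ the norm-one torus of $K/k$) yields
\[
U(A,\sigma)(k) \xrightarrow{\Nrd} K^{(1)}(k) \xrightarrow{\delta} H^1(k, SU(A,\sigma)) \to H^1(k, U(A,\sigma)).
\]
A kernel class $\xi$ maps to a hermitian form in $H^1(k, U(A,\sigma))$ which is hyperbolic over each $L_i$; since $2 \in S(G)$ forces $d$ odd, Bayer-Lenstra makes it hyperbolic over $k$, so $\xi = \delta(z)$ for some $z \in K^{(1)}(k)$. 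The triviality of $\xi|_{L_i}$ gives $z \in \Nrd(U(A \otimes L_i, \sigma)(L_i))$, whence the Gille-Merkurjev norm principle yields $z^{[L_i:k]} \in \Nrd(U(A,\sigma)(k))$ and so $z^d \in \Nrd(U(A,\sigma)(k))$. Any $w \in K^{(1)}(k)$ embeds in $A$ and satisfies $\sigma(w)w = N_{K/k}(w) = 1$ with $\Nrd(w) = w^n$, so $(K^{(1)}(k))^n \subseteq \Nrd(U(A,\sigma)(k))$, and a second Bezout step using $\gcd(d,n) = 1$ gives $z \in \Nrd(U(A,\sigma)(k))$, so $\xi = 0$.

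For the orthogonal case $G = Spin(A,\sigma)$ one has $S(G) = \{2\}$, so $d$ is odd. The central extension $1 \to \mu_2 \to Spin(A,\sigma) \to O^+(A,\sigma) \to 1$ yields
\[
O^+(A,\sigma)(k) \xrightarrow{\mathrm{Sn}} k^*/(k^*)^2 \xrightarrow{\delta} H^1(k, Spin(A,\sigma)) \to H^1(k, O^+(A,\sigma)),
\]
where $\mathrm{Sn}$ is the spinor norm. The image of a kernel class $\xi$ in $H^1(k, O^+(A,\sigma))$ corresponds to a hermitian form of trivial discriminant which is hyperbolic in each $L_i$; Bayer-Lenstra (valid since $d$ is odd) makes this form hyperbolic over $k$, so $\xi = \delta(a)$ for some $a \in k^*/(k^*)^2$. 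The hypothesis forces $a$ to be a spinor norm in each $L_i$; the Gille-Merkurjev norm principle for spinor norms gives $a^d$ a spinor norm in $k$, and since $d$ is odd $a \equiv a^d \pmod{(k^*)^2}$, so $a$ is a spinor norm and $\xi = 0$. The principal technical difficulty is the unitary case, where both types of primes in $S(G)$ are active and must be handled by two distinct descent principles applied in sequence; the orthogonal case is cleaner because $S(G)=\{2\}$, and identifying the correct exponent bound on the cokernel of the reduced norm in the unitary step is the subtlety most in need of care.
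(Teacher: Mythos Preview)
Your symplectic case contains a genuine error: the claim that hermitian forms over $(A,\sigma)$ with $\sigma$ symplectic are classified by rank is false when $A$ is not split. Already for $A=D$ a quaternion division algebra with canonical involution $\gamma$ one has $Sp(A,\sigma)=SL_1(D)$, so $H^1(k,Sp(A,\sigma))=k^*/\Nrd(D^*)$, which is nontrivial whenever the norm form of $D$ is not universal. More generally, rank-one hermitian forms $\langle a\rangle$ over $(D,\gamma)$ with $a\in k^*$ satisfy $\langle a\rangle\cong\langle b\rangle$ exactly when $a/b\in\Nrd(D^*)$, so via Morita equivalence $H^1(k,Sp(A,\sigma))$ is nontrivial whenever $A$ has nontrivial index. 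The paper does not claim this set vanishes; it applies the Bayer--Lenstra odd-degree descent directly to these rank-one hermitian forms, which is precisely the mechanism you invoke in the other cases and which suffices here too.

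Your orthogonal argument has a smaller but real gap. You treat the image of $\xi$ in $H^1(k,O^+(A,\sigma))$ as ``a hermitian form of trivial discriminant'' and apply Bayer--Lenstra, but $H^1(k,O^+)$ is not that set: the map $H^1(k,O^+)\to H^1(k,O)$ has kernel equal to the cokernel of $O(A,\sigma)(k)\to\mu_2$, and when $A$ is nonsplit Kneser's result $O(A,\sigma)(k)=O^+(A,\sigma)(k)$ makes this cokernel all of $\mu_2$. The paper isolates this step as its Proposition~\ref{sub}, chasing the extra $\mu_2$ through a compatible diagram over $k$ and $L$. Your Type~$^1A$ and unitary arguments are correct, and in fact your exponent bound in the unitary case --- embedding $w\in K^{(1)}(k)$ into $U(A,\sigma)(k)$ with $\Nrd(w)=w^n$ --- is more direct than the paper's route through Hilbert~90, Lemma~\ref{index}, and Merkurjev's description of $\Nrd(U(A,\sigma)(k))$.
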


If $G=Sp(A,\sigma)$ for $\sigma$ a symplectic involution on a central simple algebra over $k$, then $H^1(k, Sp(A, \sigma))$ classifies rank one hermitian forms over $(A, \sigma)$. Then for any finite extension of odd degree $L$ over $k$, triviality of the kernel of the map $H^1(k,Sp(A, \sigma)) \to H^1(L,Sp(A, \sigma))$ is a consequence of the Bayer-Lenstra theorem \cite[Theorem 2.1]{BayerLenstra}. We discuss the remaining cases in what follows.

We will need the following lemma in the rest of this section.

\begin{lemma} \label{index}
Let $K$ be a field and let $A$ be a central simple algebra over $K$ of index $s$. Let $\Nrd$ be the reduced norm. For every $\alpha \in K^*$, there exists $\beta \in A^*$ such that $\Nrd(\beta) = \alpha^s$
\end{lemma}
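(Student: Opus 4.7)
The plan is to reduce to the underlying division algebra via Wedderburn's theorem and exhibit an explicit $\beta$. By Wedderburn, write $A \cong M_r(D)$, where $D$ is the central division $K$-algebra associated to $A$; by definition of the index we have $\deg D = s$. This structural decomposition is the key, because we know how the reduced norm of $A$ restricts to block-diagonal elements: $\Nrd_A(\operatorname{diag}(d_1, \ldots, d_r)) = \prod_{i=1}^r \Nrd_D(d_i)$.

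With this in hand, I would simply take $\beta = \operatorname{diag}(\alpha, 1, \ldots, 1) \in M_r(D) \cong A$, viewing $\alpha \in K^\ast$ as a central scalar inside $D^\ast$. Since $\alpha$ is a scalar of the degree-$s$ central simple algebra $D$, a splitting argument (after extending scalars to a splitting field of $D$, the element $\alpha$ becomes the scalar matrix $\alpha I_s$, whose determinant is $\alpha^s$) gives $\Nrd_D(\alpha) = \alpha^s$. Therefore $\Nrd_A(\beta) = \Nrd_D(\alpha) \cdot 1 \cdots 1 = \alpha^s$, as required.

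There really is no hard step here: the lemma is essentially a direct consequence of the definition of the index together with the standard fact that the reduced norm of a central scalar in a degree-$n$ central simple algebra is its $n$-th power. The only care needed is to invoke Wedderburn to pass from the general $A$ to its division algebra part $D$, since the reduced norm on $A$ and on $D$ take values in the same field $K$ and agree (up to the power coming from the $r$ blocks) on the image of $D$.
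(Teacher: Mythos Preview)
Your argument is correct. The paper, however, takes a different route: it picks a separable splitting field $E/K$ of degree $s$ (which exists because $s$ is the index of $A$), observes that over $E$ the reduced norm is surjective so $\alpha\in\Nrd(A_E^*)$, and then invokes the norm principle $N_{E/K}(\Nrd(A_E^*))\subset\Nrd(A^*)$ together with $N_{E/K}(\alpha)=\alpha^s$ to conclude. Your approach is more elementary and self-contained---it only uses Wedderburn and the formula $\Nrd_D(\alpha)=\alpha^{\deg D}$ for a central scalar---whereas the paper's proof relies on an external norm principle. The paper's choice is consistent with the overall strategy of the section, where such norm principles are the main engine; but for this particular lemma your direct construction is arguably cleaner.
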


\begin{proof}
By \cite[Proposition 4.5.4]{GilleSzam}, choose a splitting field $E$ for $A$ such that $[E:K] = s$. Since $A_E$ is split, $\Nrd:A_E \to E$ is onto. In particular $\alpha$ is in $\Nrd(A_E)$. Since $N_{E/K}(\Nrd(A_E)) \subset \Nrd(A)$ \cite[Corollary 2.3]{BayerPariHasse} and $N_{E/K}(\alpha) = \alpha^s$, it follows that $\alpha^s$ is in $\Nrd(A)$.
\end{proof}

\subsection*{Type $^1A_{n-1}$}

\begin{proposition}
Let $k$ be a field, A a central simple algebra of degree $n$ over $k$ and $G=SL_1(A)$. Let $\{L_i\}_{1 \leq i \leq m}$ be a set of finite extensions of $k$ let $\gcd([L_i:k])=d$. If $d$ is coprime to $n$, then the canonical map  
\[
H^1(k,G) \to \displaystyle \prod_{i=1}^m H^1(L_i, G)
\]
has trivial kernel

\end{proposition}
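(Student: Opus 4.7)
The plan is to translate the statement into one about cosets of the reduced norm map, and then combine a corestriction/norm argument with Lemma \ref{index}.

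First I would identify $H^1(k, SL_1(A))$ with $k^*/\Nrd(A^*)$ via the exact sequence
\[
1 \to SL_1(A) \to GL_1(A) \xrightarrow{\Nrd} G_m \to 1
\]
together with Hilbert's Theorem 90, which gives $H^1(k, GL_1(A)) = 1$. Under this identification, the restriction map $H^1(k, G) \to H^1(L_i, G)$ becomes the evident map $k^*/\Nrd(A^*) \to L_i^*/\Nrd(A_{L_i}^*)$ induced by the inclusion $k^* \hookrightarrow L_i^*$.

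Now let $\alpha \in k^*$ represent a class in the kernel; then $\alpha \in \Nrd(A_{L_i}^*)$ for every $i$. Since the cohomology set in question is the abelian group $k^*/\Nrd(A^*)$, the restriction-corestriction formula (equivalently, the classical norm principle for reduced norms) yields $N_{L_i/k}(\alpha) = \alpha^{[L_i:k]} \in \Nrd(A^*)$ for each $i$. A Bezout combination of the degrees $[L_i:k]$ then gives $\alpha^d \in \Nrd(A^*)$. On the other hand, Lemma \ref{index} provides some $\beta \in A^*$ with $\Nrd(\beta) = \alpha^s$, where $s$ is the index of $A$; since $s \mid n$ and $\gcd(d, n) = 1$, we have $\gcd(d, s) = 1$. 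Writing $1 = ud + vs$ for integers $u, v$ yields
\[
\alpha = (\alpha^d)^u (\alpha^s)^v \in \Nrd(A^*),
\]
so $\alpha$ represents the trivial class in $H^1(k, G)$, as required.

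No serious obstacle is anticipated: the argument is a routine combination of Hilbert 90, a corestriction/norm computation, and Lemma \ref{index}. The only point of care is verifying that the restriction map is correctly translated through the $H^1(k, SL_1(A)) = k^*/\Nrd(A^*)$ identification, and that the norm principle delivers the multiplicative power $\alpha^{[L_i:k]}$ in the base; both are standard.
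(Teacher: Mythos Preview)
Your proposal is correct and follows essentially the same route as the paper: both use Hilbert~90 to identify $H^1(k,SL_1(A))$ with $k^*/\Nrd(A^*)$, then combine the norm principle for reduced norms with $N_{L_i/k}(\alpha)=\alpha^{[L_i:k]}$ to get $\alpha^d\in\Nrd(A^*)$, and finish via Lemma~\ref{index} and a B\'ezout identity in $d$ and the index $s$. The only cosmetic difference is that the paper tracks everything through an explicit commutative diagram rather than immediately passing to the quotient $k^*/\Nrd(A^*)$.
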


\begin{proof}
Consider the short exact sequence
\begin{equation} 
\xymatrix{
1 \ar[r] &SL_1(A) \ar[r] &GL_1(A) \ar[r]^-{\Nrd} &G_m \ar[r] &1}
\end{equation}

\noindent
which by Hilbert's Theorem 90 induces the following commutative diagram with exact rows.

\begin{equation} 
\xymatrix{
A^* \ar[r]^-{Nrd} \ar[d] &k^* \ar[r]^-\delta \ar[d]^-g &H^1(k,SL_1(A)) \ar[r] \ar[d]^-h &1 \\
\prod A_{L_i}^* \ar[r]^-{Nrd} & \prod L_i^* \ar[r]^-\delta  &\prod H^1(L_i,SL_1(A)) \ar[r] &1}
\end{equation}

Choose $\lambda \in \ker(h)$. By the exactness of the top row of the diagram, choose $\lambda^\prime \in k^*$ such that $\delta(\lambda^\prime) = \lambda$. Fix an index $i$. Since $\delta(g(\lambda^\prime)) = \point$, by exactness of the bottom row choose $(\lambda^{\prime\prime}_i) \in A_{L_i}^*$ such that $\Nrd(\lambda^{\prime\prime}_i) = g(\lambda^\prime)$. By restriction-corestriction, $N_{L_i/k}(g(\lambda^\prime)) = ({\lambda^\prime})^{m_i}$ where $m_i=[L_i:k]$. By the norm principle for reduced norms \cite[Corollary 2.3]{BayerPariHasse}, $N_{L_i/k}(\Nrd(A_{L_i}^*)) \subset \Nrd(A^*)$. In particular, $({\lambda^\prime})^{m_i}$ is in $\Nrd(A^*)$. Since $d = \sum m_i n_i$ for appropriate choice of integers $n_i$, $(\lambda^\prime)^d = \prod ((\lambda^\prime)^{m_i})^{n_i}$ is in $\Nrd(A^*)$.

Let $s$ be the index of $A$. Then by \ref{index}, $(\lambda^\prime)^s$ is in $\Nrd(A^*)$. Since $s$ divides $n$ and by assumption $d$ and $n$ are coprime, then $d$ and $s$ are coprime. So choose $a$ and $b$ such that $sa+db=1$. Then $\lambda^\prime = ({\lambda^\prime})^{sa} ({\lambda^\prime})^{db}$ is in $\Nrd(A^*)$ and by exactness of the top row $\lambda=\delta(\lambda^\prime)$ is the point in $H^1(k,SL_1(A))$.
\end{proof}

\subsection*{The Unitary case}
\begin{theorem}\label{SU}
Let  A be a central simple algebra of degree $n$ with center K and $\sigma$ a unitary involution on $A$ with $K^{\sigma}=k$. Suppose $deg_K (A )\geq 2$. Let  $G=SU(A, \sigma)$. Let $\{L\}_{1 \leq i \leq m}$ be a set of finite field extensions of $k$ with $\gcd([L_i:k]) =d$. If $d$ is odd and coprime to $n$, then the canonical map
\[
H^1(k,G) \rightarrow \displaystyle \prod_{i=1}^m H^1(L_i,G)
\]
has trivial kernel.

\end{theorem}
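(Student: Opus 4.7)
The plan is to reduce to a reduced-norm computation, in parallel with the preceding Type $^1A_{n-1}$ argument. Start from the exact sequence of algebraic $k$-groups
\[
1 \to SU(A,\sigma) \to U(A,\sigma) \xrightarrow{\Nrd} U_1(K) \to 1,
\]
where $U_1(K) = R^{(1)}_{K/k}\mathbb{G}_m$ is the norm-one torus; the induced cohomology sequence
\[
U(A,\sigma)(k) \xrightarrow{\Nrd} U_1(K)(k) \xrightarrow{\delta} H^1(k,SU(A,\sigma)) \xrightarrow{f} H^1(k,U(A,\sigma))
\]
and its base change along each $L_i$ will be the main tools.

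Given $\alpha \in \ker\bigl(H^1(k, SU) \to \prod_i H^1(L_i, SU)\bigr)$, the image $f(\alpha) \in H^1(k, U(A,\sigma))$ lies in the corresponding product kernel. Writing $(A,\sigma) \cong (\End_D V, \tau_h)$ with $(D,\theta)$ a division algebra with unitary involution, one identifies $H^1(k, U(A,\sigma))$ with the pointed set of isomorphism classes of hermitian forms of rank $\dim_D V$ over $(D,\theta)$. Since $d$ is odd, at least one $m_{i_0} = [L_{i_0}:k]$ is odd, and the Bayer-Lenstra theorem \cite[Theorem 2.1]{BayerLenstra} forces $f(\alpha) = \point$. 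By exactness $\alpha = \delta(z)$ for some $z \in U_1(K)(k)$, and the task becomes showing $z \in \Nrd(U(A,\sigma))(k)$.

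The next step combines two applications of the Gille-Merkurjev norm principle. For each $i$, $\alpha_{L_i} = \point$ and exactness over $L_i$ give $z_{L_i} \in \Nrd(U(A_{L_i}, \sigma_{L_i}))$; applying the norm principle to $\Nrd : U(A,\sigma) \to U_1(K)$ and $L_i/k$ yields $z^{m_i} = N_{L_i/k}(z_{L_i}) \in \Nrd(U(A,\sigma))$, and a B\'ezout relation $d = \sum n_i m_i$ gives $z^d \in \Nrd(U(A,\sigma))$. To obtain a power of $z$ coprime to $d$, let $s$ be the index of $A$ and choose a splitting field $E \supseteq K$ of $A$ with $[E:K] = s$, so $[E:k] = 2s$. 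Over $E$ the quadratic extension splits as $K \otimes_k E \cong E \times E$ with $\sigma$ swapping factors; consequently $(A_E, \sigma_E) \cong (B \times B^{op}, \text{swap})$ with $B = A \otimes_K E \cong M_n(E)$, so $U(A_E, \sigma_E) \cong GL_1(B) \cong GL_n(E)$ and the reduced norm becomes the determinant, surjecting onto $U_1(K)(E) \cong E^*$. Hence $z \in \Nrd(U(A_E, \sigma_E))$, and a second invocation of the norm principle yields $z^{2s} \in \Nrd(U(A,\sigma))$. Since $s \mid n$, $\gcd(n, d) = 1$, and $d$ is odd, $\gcd(2s, d) = 1$; a B\'ezout combination of $z^d$ and $z^{2s}$ then gives $z \in \Nrd(U(A,\sigma))$, whence $\alpha = \delta(z) = \point$.

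The main obstacle I anticipate is the splitting-field step: one must verify carefully that after scalar extension to $E$ the reduced norm on $U(A_E,\sigma_E)$ surjects onto $U_1(K)(E)$, and that the corestriction of the $k$-rational element $z \in U_1(K)$ along $E/k$ is genuinely the power $z^{2s}$. The remaining ingredients, namely the Bayer-Lenstra input and the two B\'ezout arguments, follow the template of the $^1A_{n-1}$ proof and should be essentially routine.
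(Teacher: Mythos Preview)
Your proposal is correct and follows the paper's architecture almost exactly: the same exact sequence $1 \to SU \to U \xrightarrow{\Nrd} R^1_{K/k}\mathbb G_m \to 1$, the Bayer--Lenstra step to land in the image of $\delta$, and Merkurjev's norm principle applied over each $L_i$ to obtain $z^d \in \Nrd(U(A,\sigma)(k))$. The only genuine divergence is in how you produce a second power of $z$ coprime to $d$. The paper writes $z = \mu^{-1}\bar\mu$ by classical Hilbert 90, uses Lemma~\ref{index} to get $\mu^s = \Nrd(a)$ for some $a \in A^*$, so that $z^s = \Nrd(a^{-1}\sigma(a))$, and then invokes a separate result of Merkurjev \cite[Proposition 6.1]{MerkurjevNP} to conclude $z^s \in \Nrd(U(A,\sigma)(k))$. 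You instead pass to a degree-$s$ splitting field $E \supset K$ and apply the norm principle a second time along $E/k$, obtaining $z^{2s}$. Both arguments work; yours is more uniform in that it reuses the same tool twice, while the paper's gives the sharper exponent $s$ rather than $2s$, which is immaterial here since $d$ is odd. The two concerns you flag are harmless: surjectivity of $\Nrd$ over $E$ is exactly the $GL_n$ computation you sketched, and $N_{E/k}(z) = z^{[E:k]}$ is restriction--corestriction on the torus $R^1_{K/k}\mathbb G_m$ (one should take $E/K$ separable, which is always possible).
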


\begin{proof}
Consider the short exact sequence
\begin{equation} 
\noindent\
\xymatrix{
 1 \ar[r] &SU(A, \sigma) \ar[r] &U(A, \sigma) \ar[r]^-{Nrd} &R^1_{K/k}G_m \ar[r] &1
}
\end{equation}

\noindent
which induces the following commutative diagram in Galois Cohomology with exact rows.

\begin{equation} \label{SUDynkin}
\xymatrix{
U(A, \sigma)(k) \ar[r]^-{Nrd} \ar[d] &R^1_{K/k}G_m(k) \ar[r]^-{\delta} \ar[d]^-f & H^1(k, SU(A, \sigma)) \ar[r]^j \ar[d]^-g &H^1(k, U(A, \sigma)) \ar[d]^-h \\
\prod U(A, \sigma)(L_i) \ar[r]^-{Nrd} &\prod R^1_{K/k}G_m(L_i) \ar[r]^-{\delta} & \prod H^1(L_i,SU(A,\sigma)) \ar[r]^j & \prod H^1(L_i, U(A, \sigma))}
\end{equation}

Choose  $\lambda \in \ker(g)$. By assumption, there is an index $i$ such that $[L_i:k]$ is odd. Fix that index $i$ and let $L_i=L$. By the Bayer-Lenstra theorem \cite[Theorem 2.1]{BayerLenstra}, $H^1(k,U(A, \sigma)) \to H^1(L,U(A, \sigma))$ has trivial kernel. In particular, $h$ has trivial kernel and $\lambda$ is in $\ker(j)$. So choose $\lambda^{\prime}  \in R^1_{K/k} G_{\mathbf{m}} (k)$ such that $\delta(\lambda^\prime) = \lambda$. Since $\delta(f(\lambda^\prime)) = \text{point}$, exactness of the bottom row of the diagram gives $(\lambda^{\prime\prime}_i) \in \prod U(A, \sigma)(L_i) $ such that $\Nrd(\lambda^{\prime\prime}_i) = f( \lambda^\prime)$. Applying $N_{L_i/k}$ to both sides of this equality we find $N_{L_i/k}(\Nrd(\lambda^{\prime\prime}_i)) = N_{L_i/k}(f( \lambda^\prime))$. Since $U(A, \sigma)$ is a rational group, \cite[Theorem 3.9]{MerkurjevNP} gives that for each $i$, $N_{L_i/k}(\Nrd(\lambda^{\prime\prime}_i))$ is in the image of $\Nrd: U(A, \sigma)(k) \to R^1_{K/k} G_m(k)$. By restriction-corestriction, for each $i$, $N_{L_i/k}(f( \lambda^\prime))=(\lambda^\prime)^{m_i}$ for $m_i=[L_i:k]$. So for each $i$, $(\lambda^\prime)^{m_i}$ is in the image of $\Nrd: U(A, \sigma)(k) \to R^1_{K/k} G_m(k)$. Since $(\lambda^\prime)^d = \prod((\lambda^\prime)^{m_i})^{n_i}$ for appropriate choice of integers $n_i$, then $(\lambda^\prime)^d$ is in the image of $\Nrd: U(A, \sigma)(k) \to R^1_{K/k} G_m(k)$.

By Classical Hilbert 90 write $\lambda^\prime= \mu^{-1} \bar \mu$ for $\mu \in K^*$ and $\bar \mu$ the image of $\mu$ under the nontrivial automorphism of $K$ over $k$. Let $s$ be the index of $A$ and write $(\lambda^\prime)^s= (\mu^s)^{-1} \bar{\mu^s}$. By \ref{index}, $\mu^s=\Nrd(a)$ for some $a \in A*$. Thus $(\lambda^\prime)^s = \Nrd(a^{-1} \sigma(a))$ and by Merkurjev's theorem \cite[Proposition 6.1]{MerkurjevNP} $(\lambda^\prime)^s$ is in the image of $\Nrd:U(A, \sigma)(k) \to R^1_{K/k} G_m(k)$.

Certainly, $s$ divides $n$ and since by assumption $d$ is coprime to $n$, then $d$ is coprime to $s$. In particular, there exist $v,w \in \mathbb{Z}$ such that $dv+sw=1$. Therefore $\lambda^\prime = (\lambda^\prime)^{dv} (\lambda^\prime)^{sw}$ is in the image of $\Nrd: U(A, \sigma)(k) \to R^1_{K/k} G_m(k)$ and by exactness of the top row of \eqref{SUDynkin}, $\lambda = \delta(\lambda^\prime) = \point$.

\end{proof}

\subsection*{The Orthogonal case} ~\\

\noindent 
Our proof in this case makes use of the following result. 

\begin{proposition} \label{sub}
Let $k$ be a field of characteristic different from 2 and let $A$ be a central simple algebra over $k$ of degree $\geq 3$ with orthogonal involution $\sigma$. Let $G=O^+(A, \sigma)$ and let $L$ be a finite extension of $k$ of odd degree. Then the canonical map  
\[ 
H^1(k, G) \rightarrow H^1(L,G)
\]
has trivial kernel.

\end{proposition}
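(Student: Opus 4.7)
The plan mirrors the unitary case in the proof of Theorem \ref{SU}: combine the Bayer-Lenstra theorem with a norm principle via the short exact sequence of algebraic $k$-groups
\[
1 \to O^+(A, \sigma) \to O(A, \sigma) \xrightarrow{\Nrd} \mu_2 \to 1.
\]
Exactness on the left uses that $\sigma(a)a=1$ forces $\Nrd(a)^2 = 1$ for $a \in O(A,\sigma)$; surjectivity of $\Nrd$ as a map of algebraic groups is visible after passage to the separable closure.

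Passing to Galois cohomology, I form the commutative diagram of long exact sequences for $k$ and $L$. Given $\lambda \in \ker(H^1(k, O^+(A, \sigma)) \to H^1(L, O^+(A, \sigma)))$, its image in $H^1(k, O(A, \sigma))$ restricts to the point in $H^1(L, O(A, \sigma))$. The Bayer-Lenstra theorem \cite[Theorem 2.1]{BayerLenstra} gives that $H^1(k, O(A, \sigma)) \to H^1(L, O(A, \sigma))$ has trivial kernel, so this image is already the point. Exactness then yields $\epsilon \in \mu_2(k) = \{\pm 1\}$ with $\lambda = \delta(\epsilon)$, where $\delta$ is the connecting map.

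To complete the proof I must show $\epsilon$ lies in the image of $\Nrd : O(A, \sigma)(k) \to \mu_2(k)$. Restricting the equation $\lambda = \delta(\epsilon)$ to $L$ exhibits $\epsilon$ as $\Nrd(a_L)$ for some $a_L \in O(A_L, \sigma_L)(L)$. I then invoke Merkurjev's norm principle \cite[Theorem 3.9]{MerkurjevNP} for the reduced norm, viewed as a morphism from the classical group $O(A,\sigma)$ to $G_m$ via $\mu_2 \hookrightarrow G_m$, to obtain
\[
\epsilon = \epsilon^{[L:k]} = N_{L/k}(\Nrd(a_L)) \in \Nrd(O(A, \sigma)(k)),
\]
the first equality using that $[L:k]$ is odd and $\epsilon \in \{\pm 1\}$. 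Exactness then forces $\delta(\epsilon)$ to be the point, so $\lambda$ is the point as required.

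The main obstacle is the clean application of the norm principle, since $O(A, \sigma)$ is not connected. In odd degree the difficulty evaporates: the element $-I$ has reduced norm $(-1)^{\deg A} = -1$, so $\Nrd$ is already surjective on $k$-points and the proposition follows directly from the Bayer-Lenstra step alone. In even degree one expects to route the norm principle through the connected reductive group $GO^+(A, \sigma)$, whose multiplier map lands in the torus $G_m$, or alternatively to construct an improper isometry of $(A,\sigma)$ by hand from a diagonalization of a hermitian form whose adjoint involution is $\sigma$.
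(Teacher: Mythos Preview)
Your framework matches the paper's: both use the exact sequence $1 \to O^+(A,\sigma) \to O(A,\sigma) \xrightarrow{\Nrd} \mu_2 \to 1$, apply Bayer--Lenstra to conclude that the image of $\lambda$ in $H^1(k,O(A,\sigma))$ is trivial, and then lift to some $\epsilon \in \mu_2$ with $\delta(\epsilon)=\lambda$. Your odd-degree remark (that $-I$ is an improper isometry) is also correct and handles that case immediately.

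The gap is in even degree. Your invocation of Merkurjev's norm principle for $O(A,\sigma)$ is not justified: the hypotheses in \cite[Theorem 3.9]{MerkurjevNP} and \cite[Th\'eor\`eme II.3.2]{GilleNP} as used in this paper require the group to be rational (in particular connected), and $O(A,\sigma)$ is disconnected. You acknowledge this and propose two fixes, but neither is carried out, and the second one is actually wrong in the key case. When $A$ is non-split of even degree, a result of Kneser \cite[2.6, Lemma 1.b]{Kneser} says precisely that $O(A,\sigma)(k)=O^+(A,\sigma)(k)$, i.e.\ there are \emph{no} improper isometries over $k$. So ``constructing an improper isometry by hand from a diagonalization'' cannot succeed in that case; there is nothing to construct.

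The paper closes the gap by a split/non-split dichotomy rather than a norm principle. If $A$ is split, hyperplane reflections make $\Nrd:O(q)(k)\to\mu_2$ surjective, so $H^1(k,O^+)\to H^1(k,O)$ already has trivial kernel and Bayer--Lenstra (or Springer) finishes. If $A$ is non-split, then since $A$ has $2$-primary index and $[L:k]$ is odd, $A_L$ is also non-split; Kneser's lemma then gives $O(A,\sigma)(k)=O^+(A,\sigma)(k)$ and $O(A,\sigma)(L)=O^+(A,\sigma)(L)$, so the connecting map $\delta:\mu_2\to H^1(-,O^+)$ is injective over both $k$ and $L$. Since the restriction $\mu_2(k)\to\mu_2(L)$ is the identity and $\delta(\epsilon)$ becomes trivial over $L$, one gets $\epsilon=1$ directly, without any norm principle. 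This is the missing ingredient in your even-degree argument.
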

\begin{proof}

We have the short exact sequence 
\begin{equation} \label{SUh1SES}
\noindent\
\xymatrix{
 1 \ar[r] &O^+(A, \sigma) \ar[r] &O(A, \sigma) \ar[r]^-{Nrd} &\mu_2 \ar[r] &1
}
\end{equation}

In the case $A$ is split, $O(A, \sigma) = O(q)$ the orthogonal group of a quadratic form $q$, $O^+(A, \sigma) = O^+(q)$ and the reduced norm is the determinant. Springer's theorem \cite[Chapter VII Theorem 2.7]{Lam} gives $H^1(k, O(q)) \to H^1(L, O(q))$ has trivial kernel. That $H^1(k, O^+(q)) \to H^1(k, O(q))$ has trivial kernel follows from the observation that the determinant map $O(q)(k) \to \mu_2$ is onto. Combining these two results, \ref{sub} holds. 

So assume $A$ is not split. Then $O^+(A, \sigma)(k) = O(A, \sigma)(k)$ \cite[2.6, Lemma 1.b]{Kneser}. Since $A$ admits an involution of the first kind and $L/k$ is odd, $A_L$ is not split and $O^+(A, \sigma)(L)=O(A, \sigma)(L)$.

Then \eqref{SUh1SES} induces the following diagram with exact rows and commuting rectangles.

\begin{equation} \label{SUh1LES}
\xymatrix{
 1 \ar[r] &\mu_2 \ar[r]^-\delta \ar[d]^h &H^1(k, O^+(A, \sigma)) \ar[r]^-i \ar[d]_-f &H^1(k, O(A, \sigma)) \ar[d]^-g \\
 1 \ar[r] &\mu_2 \ar[r]^-\delta &H^1(L, O^+(A, \sigma)) \ar[r]^-i &H^1(L, O(A, \sigma))}
\end{equation}

\bigskip

\noindent
Let $\lambda \in \ker(f) $. By the commutativity of the rightmost rectangle in \eqref{SUh1LES}, $g(i(\lambda)) = \point$. Then \cite[Theorem 2.1]{BayerLenstra} gives $i(\lambda) = \point$. By the exactness of the top row, there exists $\lambda^\prime \in \mu_2$ such that $\delta(\lambda^\prime) = \lambda$. Since the left rectangle in \ref{SUh1LES} commutes, $\delta(h(\lambda^\prime)) = \point$. Since $h$ is the identity map and $\delta$ has trivial kernel, $\lambda^\prime=1$ and thus $\lambda=\delta(\lambda^\prime) = \point$.

\end{proof}

Now we give the proof for absolutely simple, simply connected groups in the orthogonal case.

\begin{theorem} \label{Spinh}
Let $k$ be a field of characteristic different from 2 and let $A$ be a central simple algebra over $k$ of degree $\geq 4$ with orthogonal involution $\sigma$. Let $G=Spin(A, \sigma)$ and let $L$ be a finite extension of $k$ of odd degree. Then the canonical map  
\[
H^1(k, G) \rightarrow H^1(L,G) 
\]
has trivial kernel.

\end{theorem}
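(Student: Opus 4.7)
My plan mirrors the strategy of Proposition \ref{sub}, now applied to the central isogeny
\[
1 \to \mu_2 \to Spin(A,\sigma) \to O^+(A,\sigma) \to 1,
\]
whose cohomology boundary $O^+(A,\sigma)(k) \to H^1(k,\mu_2) = k^*/k^{*2}$ is the spinor norm $\operatorname{Sn}$. Take a class $\lambda \in \ker(H^1(k,Spin(A,\sigma)) \to H^1(L,Spin(A,\sigma)))$. Pushing $\lambda$ forward to $H^1(k,O^+(A,\sigma))$ and invoking Proposition \ref{sub} (available since $\deg A \geq 4 \geq 3$) shows that its image there is the distinguished point.

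By exactness of the long exact sequence associated to the isogeny, $\lambda = \delta(\lambda')$ for some $\lambda' \in k^*/k^{*2}$. Chasing the analogous exact sequence over $L$, the image of $\lambda'$ in $L^*/L^{*2}$ is a spinor norm $\operatorname{Sn}_L(a)$ for some $a \in O^+(A,\sigma)(L)$.

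The closing step is to apply $N_{L/k}: L^*/L^{*2} \to k^*/k^{*2}$. By restriction-corestriction, $N_{L/k}$ composed with the scalar extension $k^*/k^{*2} \to L^*/L^{*2}$ is multiplication by the odd integer $[L:k]$, hence the identity on $k^*/k^{*2}$; so $N_{L/k}(\operatorname{Sn}_L(a)) = \lambda'$. On the other hand, the Gille norm principle for the spinor norm \cite[Th\'eor\`eme II.3.2]{GilleNP} forces $N_{L/k}(\operatorname{Sn}_L(a)) \in \operatorname{Sn}(O^+(A,\sigma)(k))$. Combining the two, $\lambda'$ is a spinor norm over $k$ and therefore lies in $\ker \delta$, whence $\lambda = \delta(\lambda')$ is the distinguished point.

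The main obstacle I anticipate is the proper packaging of the norm principle: it must apply to the spinor norm, which is the coboundary of a central isogeny of connected reductive groups. This is precisely the setting of the Gille--Merkurjev norm principle flagged in the introduction as a critical tool for simply connected classical groups, so the difficulty is conceptual rather than technical; once the isogeny above is in hand the rest is a diagram chase parallel to the one carried out for Proposition \ref{sub}.
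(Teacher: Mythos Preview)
Your proposal is correct and follows essentially the same route as the paper's own proof: the same central isogeny, the same appeal to Proposition~\ref{sub} to kill the image in $H^1(k,O^+(A,\sigma))$, the same restriction--corestriction computation in $k^*/k^{*2}$, and the same invocation of Gille's norm principle \cite[Th\'eor\`eme II.3.2]{GilleNP}. The only point the paper makes explicit that you leave implicit is the hypothesis needed for Gille's theorem: $O^+(A,\sigma)$ is a rational variety, which is what justifies the norm principle for the spinor-norm coboundary.
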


\begin{proof}
The short exact sequence 

\begin{equation} \label{SpinhSES}
\noindent\
\xymatrix{
 1 \ar[r] &\mu_2 \ar[r]^-i &Spin(A, \sigma) \ar[r]^-{\eta} &O^+(A, \sigma) \ar[r] &1
}
\end{equation}

induces the following commutative diagram with exact rows. 

\begin{equation} \label{SpinhLES}
\xymatrix{
 O^+(A, \sigma)(k) \ar[d]^-f \ar[r]^-{\delta} &H^1(k,\mu_2) \ar[r]^-i \ar[d]^g &H^1(k, Spin(A, \sigma)) \ar[r]^-\eta \ar[d]_-h &H^1(k, O^+(A, \sigma)) \ar[d]^-j \\
 O^+(A, \sigma)(L) \ar[r]^-{\delta} &H^1(L,\mu_2) \ar[r]^-i &H^1(L, Spin(A, \sigma)) \ar[r]^-\eta &H^1(L, O^+(A, \sigma))}
\end{equation}

\bigskip

\noindent
Choose $\lambda \in \ker(h)$. By commutativity of the rightmost rectangle in \eqref{SpinhLES} $j(\eta(\lambda)) = \point$. In particular, $\eta(\lambda) \in \ker(j)$ and by  \ref{sub}, $\eta(\lambda) = \point$. By exactness of the top row, we may choose $\lambda^\prime \in H^1(k, \mu_2)$ such that $i(\lambda^\prime) = \lambda$. By the commutativity of the central rectangle in \eqref{SpinhLES}, $i(g(\lambda^\prime)) = \point$. So from exactness of the bottom row, we may choose $\lambda^{\prime\prime} \in O^+(A, \sigma)(L)$ such that $\delta(\lambda^{\prime\prime}) =g(\lambda^\prime)$. Applying the norm map to both sides of this equality we find, $N_{L/k}(\delta(\lambda^{\prime\prime})) = N_{L/k}(g(\lambda^\prime))$. By restriction-corestriction the latter is $(\lambda^\prime)^{[L:k]}$. Let $\tilde \lambda$ be a representative of $\lambda^{\prime}$ in $k^*/(k^*)^2$. Since $[L:k]$ is odd, $\tilde \lambda^{[L:k]} = \tilde \lambda$ in $k^*/(k^*)^2$ . In turn $[(\lambda^{\prime})^{[L:k]}]= [\lambda^{\prime}]$ in $H^1(k, \mu_2)$. Thus $N_{L/k}(\delta(\lambda^{\prime\prime})) = \lambda^{\prime}$. Since $O^+(A, \sigma)$ is rational, \cite[Th\'eor\`eme II.3.2]{GilleNP} gives \[\xymatrix{N_{L/k}(\text{im} (O^+(A, \sigma)(L) \ar[r]^-{\delta} &H^1(L,\mu_2)) \subset \text{im} (O^+(A, \sigma)(k) \ar[r]^-{\delta} &H^1(k,\mu_2)))}.\]
In particular  $\lambda^{\prime}$ is in the image of $O^+(A, \sigma)(k) \to H^1(k, \mu_2)$. But then by exactness of the top row, $\lambda=i(\lambda^\prime) = \point$.
\end{proof} 

\section{Absolutely Simple Adjoint Groups of Classical Type} \label{AD}

We begin by recording some general results which we shall use in the proof of \ref{admeta}. 

\begin{proposition} \label{Sim}
Let $A$ be a central simple algebra over a field $K$ with involution $\sigma$ of any kind and $k=K^{\sigma}$. Let $L$ be a finite extension of $k$ of odd degree. Let $G$ be the group of similitudes of $(A, \sigma)$. Then the canonical map
\[
H^1(k,G) \to H^1(L,G)
\]
has trivial kernel.
\end{proposition}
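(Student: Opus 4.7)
The plan is to imitate the argument of Theorem~\ref{Spinh}, starting from the exact sequence of algebraic $k$-groups
\[
1 \to \mathrm{Iso}(A,\sigma) \to G \xrightarrow{\mu} G_m \to 1,
\]
where $\mathrm{Iso}(A,\sigma)$ denotes the isometry subgroup of $G$ ($O(A,\sigma)$, $Sp(A,\sigma)$, or $U(A,\sigma)$ according to the kind of $\sigma$) and $\mu$ is the multiplier. Since $H^1(F,G_m)=1$ for every $F\supset k$ by Hilbert's Theorem~90, passing to Galois cohomology and restricting from $k$ to $L$ yields the commutative diagram of pointed sets with exact rows
\[
\xymatrix{
G(k) \ar[r]^-{\mu} \ar[d] & k^* \ar[r]^-{\delta_k} \ar[d]^-{g} & H^1(k,\mathrm{Iso}(A,\sigma)) \ar[r]^-{i} \ar[d]^-{p} & H^1(k,G) \ar[d]^-{f} \ar[r] & 1 \\
G(L) \ar[r]^-{\mu} & L^* \ar[r]^-{\delta_L} & H^1(L,\mathrm{Iso}(A,\sigma)) \ar[r]^-{i} & H^1(L,G) \ar[r] & 1.
}
\]

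The first step is to take $\lambda\in\ker(f)$, use surjectivity of $i$ to lift to $\lambda'\in H^1(k,\mathrm{Iso}(A,\sigma))$, and observe that $i(p(\lambda'))=f(\lambda)=\point$, so exactness of the bottom row provides $\beta\in L^*$ with $\delta_L(\beta)=p(\lambda')$. It will then suffice to exhibit $\alpha\in k^*$ with $\delta_k(\alpha)=\lambda'$, since then $\lambda=i(\delta_k(\alpha))=\point$.

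The Bayer--Lenstra theorem \cite[Theorem 2.1]{BayerLenstra}, applied via the standard twisting trick to all isometry groups of hermitian forms over $(D,\theta)$, makes $p$ injective. Hence producing $\alpha$ reduces to finding $\alpha\in k^*$ with $g(\alpha)\beta^{-1}\in\mu(G(L))$. The planned tool for this is the Gille--Merkurjev norm principle \cite[Th\'eor\`eme II.3.2]{GilleNP}, \cite[Theorem 3.9]{MerkurjevNP} for the rational group $G$, giving $N_{L/k}(\mu(G(L)))\subseteq\mu(G(k))$, combined with the inclusion $L^{*2}\subseteq\mu(G(L))$ from scalar similitudes (and $N_{K_L/L}(K_L^*)\subseteq\mu(G(L))$ in the unitary case). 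Since $[L:k]$ is odd, multiplication by $[L:k]$ becomes a bijection on the relevant $2$-torsion quotient of $L^*/\mu(G(L))$, so $\alpha$ can be extracted from $N_{L/k}(\beta)$ modulo $\mu(G(k))$.

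The main obstacle will be this final descent step: verifying that the class of $\beta$ in $L^*/\mu(G(L))$ really lies in the image of $g\colon k^*/\mu(G(k))\to L^*/\mu(G(L))$, so that the constructed $\alpha$ actually delivers $g(\alpha)\beta^{-1}\in\mu(G(L))$ rather than merely the weaker conclusion $N_{L/k}(g(\alpha)\beta^{-1})\in\mu(G(k))$. Bridging this gap is where Bayer--Lenstra's extension of Scharlau's transfer to hermitian forms over $(D,\theta)$ will be deployed, allowing the forms corresponding to $\lambda'$ and to $\delta_k(\alpha)$ to be compared directly in the Witt group $W(D,\theta)$.
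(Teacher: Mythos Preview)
Your plan ultimately lands on the paper's own argument: the paper lifts $\psi\in\ker(g)$ to a rank-one hermitian form $\langle x\rangle\in H^1(k,G_0)$, writes $r^*(\langle x\rangle)\cong\langle a\rangle$ for some $a\in L^*$, descends from $L$ to $k(a)$ via Bayer--Lenstra \cite[Corollary~1.4]{BayerLenstra}, and then applies Scharlau's transfer $s_*$ from $k(a)$ to $k$ to obtain $\langle x\rangle\cong\langle N_{k(a)/k}(a)\rangle$, so that $\langle x\rangle$ lies in the image of $\delta_k$ and $\psi=\point$. The Gille--Merkurjev norm principle is a detour the paper never takes for this proposition (it is used instead in \ref{SU} and \ref{Spinh}); as you yourself observe, it only controls $N_{L/k}(\mu(G(L)))$ and does not by itself produce the descent of the class of $\beta$ in $L^*/\mu(G(L))$ that you need. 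Likewise, the injectivity of $p$ via twisting is valid but unnecessary: the transfer computation directly exhibits $\langle x\rangle$ as $\delta_k$ of an explicit element of $k^*$, so no comparison of two $k$-classes through their images over $L$ is required. Your proof will go through cleanly once you drop the norm-principle paragraph and execute the transfer step directly, remembering to pass to the intermediate field $k(\beta)$ first so that Scharlau's formula $s_*(\langle\beta\rangle\otimes\langle 1\rangle_{k(\beta)})\sim\langle N_{k(\beta)/k}(\beta)\rangle\otimes\langle 1\rangle$ is available.
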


\begin{proof}
Let $G_0$ be the group of isometries of $(A, \sigma)$. We have the exact sequence
\[
1 \to G_0 \to G \to G_m \to 1
 \]

\noindent
where the map $G \to G_m$ takes each similitude $a$ to its multiplier $\sigma(a) a$. In view of Hilbert's Theorem 90, the sequence yields the following commutative diagram with exact rows.
\begin{equation} \label{SimLES}
\xymatrix{
k^* \ar[r]^-{\delta} \ar[d] &H^1(k, G_0) \ar[r]^-i \ar[d]^-{r^*} &H^1(k, G) \ar[d]^-g \ar[r] &1\\
L^* \ar[r]^-\delta &H^1(L,G_0) \ar[r]^i &H^1(L, G) \ar[r] &1
}
\end{equation}

Let $\psi \in \ker(g)$. By the exactness of the top row of \eqref{SimLES}, there exists $\qform{x} \in H^1(k, G_0)$ such that $i(\qform{x})= \psi$. Here $\qform{x}$ is a rank one hermitian form over $(A, \sigma)$. Since commutativity of the right rectangle gives $i(r^*(\qform{x})) =$ point, exactness of the second row gives an $a \in L^*$ such that $r^*(\qform{x}) = \delta(a)$. We note that $\delta(a)$ is the isomorphism class of the rank one hermitian form $\qform{a}$ over $(A, \sigma)_L$. 

Let $k(a)$ be the subfield of $L$ generated by $a$ over $k$. Since $L$ is an odd degree extension of $k(a)$ and $\qform{a}_L \cong r^*(\qform{x})_L$ then $\qform{a}_{k(a)} \cong r^*(\qform{x})_{k(a)}$ \cite[Corollary 1.4]{BayerLenstra}. Let $s:k(a) \to k$ be the $k$-linear map given by $s(1)=1$ and $s(a^j) = 0$ for all $1 \leq j < m$ where $m = [k(a):k]$ and let $s_*$ be the induced transfer homomorphism. Write $\qform{a}$ as $\qform{a} \otimes \qform{1}_{k(a)}$ in $W(k(a)) \otimes W(A_{k(a)},\sigma_{k(a)})$. Since $[k(a):k]$ is odd, results of Bayer-Lenstra and Scharlau give that $s_*(\qform{a} \otimes \qform{1}_{k(a)})$ is Witt equivalent to $\qform{N_{k(a)/k}(a)} \otimes \qform{1}$ \cite[Chapter 2, Lemma 5.8]{Scharlau}\cite[p 362]{BayerLenstra}. On the other hand, $s_*(r^*(\qform{x})) = s_*(\qform{1} \otimes \qform{x})$ and since $[L:k]$ is odd, $s_*(\qform{1} \otimes \qform{x}) \cong \qform{x}$ \cite{Scharlau}. So $\qform{N_{k(a)/k}(a)}$ is Witt equivalent to $\qform{x}$ and since the two forms have dimension one over $(A, \sigma)$, by Witt's cancellation for hermitian forms, $\qform{N_{k(a)/k}(a)} \cong \qform{x}$. Then $\qform{x} = \delta(N_{k(a)/k}(a))$ and thus $\psi = i(\qform{x}) = \point$.
\end{proof}

The following is a straightforward corollary of \ref{Sim}.

\begin{proposition} \label{PSim}
Let $A$ be a central simple algebra over a field $k$ with an involution $\sigma$ of the first kind. Let $G$ be the group of projective similitudes of $(A, \sigma)$. Let $L$ be a finite extension of $k$ of odd degree. Then the canonical map
\[
H^1(k, G) \to H^1(L, G)
\]
has trivial kernel
\end{proposition}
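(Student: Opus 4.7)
The plan is to reduce to Proposition \ref{Sim} via the central extension
\[
1 \to G_m \to G_0 \to G \to 1
\]
where $G_0$ denotes the group of similitudes of $(A, \sigma)$, so that $G = G_0/G_m$. Applying Galois cohomology, invoking Hilbert's Theorem 90 for both $k$ and $L$, and comparing the long exact sequences over $k$ and $L$ yields a commutative diagram with exact rows
\[
\xymatrix{
1 \ar[r] & H^1(k, G_0) \ar[r]^-{i} \ar[d]^-{f} & H^1(k, G) \ar[r]^-{\delta} \ar[d]^-{g} & H^2(k, G_m) \ar[d]^-{\res} \\
1 \ar[r] & H^1(L, G_0) \ar[r]^-{i} & H^1(L, G) \ar[r]^-{\delta} & H^2(L, G_m)
}
\]

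Let $\psi \in \ker(g)$. My strategy is first to show that $\delta(\psi) = 0$, and then lift $\psi$ to $H^1(k, G_0)$ so that Proposition \ref{Sim} applies directly. For the first step, the connecting homomorphism sends $\psi$ to $[A^{\psi}] - [A]$, where $(A^{\psi}, \sigma^{\psi})$ is the twisted pair representing $\psi$. Since $\sigma$ and $\sigma^{\psi}$ are both involutions of the first kind, $[A]$ and $[A^{\psi}]$ are $2$-torsion in $H^2(k, G_m)$, so $\delta(\psi)$ is $2$-torsion. Commutativity of the right square gives $\res(\delta(\psi)) = 0$; combined with $\cor \circ \res = [L:k]$ on the Brauer group, this forces $\delta(\psi)$ to be $[L:k]$-torsion as well. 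Since $[L:k]$ is odd, $\delta(\psi) = 0$.

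By exactness of the top row, choose $\tilde\psi \in H^1(k, G_0)$ with $i(\tilde\psi) = \psi$. Commutativity of the left square gives $i(f(\tilde\psi)) = g(i(\tilde\psi)) = g(\psi) = \point$; since the lower $i$ has trivial kernel by exactness of the bottom row, $f(\tilde\psi) = \point$. Proposition \ref{Sim} then yields $\tilde\psi = \point$, and hence $\psi = i(\tilde\psi) = \point$.

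The only nontrivial input beyond the formal diagram chase is the identification of the image of $\delta$ as lying in the $2$-torsion of $H^2(k, G_m)$, which is what allows the odd-degree hypothesis on $L/k$ to force $\delta(\psi)$ to vanish; everything else is automatic from the central extension and Proposition \ref{Sim}.
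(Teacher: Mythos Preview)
Your proof is correct and follows essentially the same approach as the paper: both use the central extension $1 \to G_m \to G_0 \to G \to 1$, observe that the image of $\delta$ lies in the $2$-torsion of the Brauer group (since both algebras carry involutions of the first kind), use the odd degree of $L/k$ to conclude $\delta(\psi)=0$, and then reduce to Proposition~\ref{Sim} via the obvious diagram chase. The only cosmetic difference is that you spell out the restriction--corestriction argument explicitly, whereas the paper simply asserts that $h$ is injective on $2$-torsion.
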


\begin{proof}

Let $G_0$ be the group of similitudes $Sim(A, \sigma)$. Then we have the short exact sequence 
\begin{equation}
\xymatrix{
1 \ar[r] &G_m \ar[r]^-i &G_0 \ar[r]^-{\eta} &G \ar[r] &1
}
\end{equation}

\noindent
which induces the following commutative diagram with exact rows. 

\begin{equation} \label{PSimLES}
\xymatrix{
1 \ar[r] &H^1(k,G_0) \ar[r]^-\eta \ar[d]^-f &H^1(k,G) \ar[r]^{\delta} \ar[d]^-{g} &H^2(k, G_m) \ar[d]^-h\\
1 \ar[r] &H^1(L,G_0) \ar[r]^-\eta & H^1(L,G) \ar[r]^-\delta &H^2(L,G_m)
}
\end{equation}

The set $H^1(k,G)$ is in bijection with the isomorphism classes of central simple algebras of the same degree as $A$ with involution of the same type as $\sigma$. Choose $(A^\prime, \sigma^\prime) \in \ker(g)$. By commutativity of \eqref{PSimLES}, $h(\delta(A^\prime, \sigma^\prime)) = \point$. Now $\delta(A^\prime, \sigma^\prime) = [A^\prime][A]^{-1}$ \cite[pg 405]{KMRT} which is 2-torsion in the Brauer group by \cite[Theorem 3.1]{KMRT}. In particular, since $[L:k]$ is odd, $h$ has trivial kernel on the image of $\delta$ and $\delta(A^\prime, \sigma^\prime) = \point$. By exactness of the top row of the diagram, choose $\lambda^\prime \in H^1(k,G_0)$ such that $\eta(\lambda^\prime)=(A^\prime, \sigma^\prime)$. By choice of $\lambda^\prime$, $\eta(f(\lambda^\prime)) = \point$. Then by exactness of the bottom row $f(\lambda^\prime) = \point$. But that $f$ has trivial kernel was shown in \ref{Sim}. So $\lambda^\prime =  \point$ and in turn $(A^\prime, \sigma^\prime)$ is the point in $H^1(k,G)$.
 
\end{proof}

Next we prove a norm principle for multipliers of similitudes.

\begin{lemma} \label{norm}
Let $A$ be a central simple $K$-algebra with $k$-linear involution $\sigma$. Let $L$ be a finite extension of $k$ of odd degree and let $g$ be a similitude of $(A, \sigma)_L$ with multiplier $\mu(g)$. Then $N_{L/k}(\mu(g))$ is the multiplier of a similitude of $(A, \sigma)$
\end{lemma}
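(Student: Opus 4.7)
The key observation is that the existence of a similitude of $(A,\sigma)_L$ with multiplier $\alpha := \mu(g) \in L^*$ is equivalent to the existence of an isometry of rank-one hermitian forms $\qform{1} \cong \qform{\alpha}$ over $(A,\sigma)_L$: given $g$, left multiplication by $g$ on $A_L$ (viewed as a right module) satisfies $\sigma(gx)(gy) = \sigma(x)\sigma(g)g\,y = \alpha\sigma(x)y$, and conversely any isometry $\qform{\alpha}\cong\qform{1}$ is left multiplication by some $g \in A_L^*$ with $\sigma(g)g = \alpha$. Consequently it suffices to exhibit an isometry $\qform{1}\cong\qform{N_{L/k}(\alpha)}$ of rank-one hermitian forms over $(A,\sigma)$ itself, as this will produce the required similitude of $(A,\sigma)$.

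The plan follows the transfer argument in the proof of Proposition \ref{Sim} closely. Set $E = k(\alpha) \subseteq L$; since $[L:k]$ is odd, so is $[L:E]$, and the Bayer-Lenstra odd-degree descent \cite[Corollary 1.4]{BayerLenstra} descends the isometry $\qform{1}\cong\qform{\alpha}$ from $L$ to $E$. Let $s \colon E \to k$ be the $k$-linear map sending $1$ to $1$ and $\alpha^j$ to $0$ for $1 \le j < [E:k]$, and let $s_*\colon W(A_E,\sigma_E) \to W(A,\sigma)$ denote the induced Scharlau transfer. Applying $s_*$ to the equality $\qform{1}=\qform{\alpha}$ in $W(A_E,\sigma_E)$, the Bayer-Lenstra identity $s_*r^* = \mathrm{id}$ (valid because $[E:k]$ is odd) reduces the left side to $\qform{1}_{(A,\sigma)}$, while the Frobenius-type formula $s_*(\qform{\alpha}\otimes r^*\qform{1}) = s_*(\qform{\alpha})\otimes\qform{1}_{(A,\sigma)}$ combined with Scharlau's computation $s_*(\qform{\alpha}) \sim \qform{N_{E/k}(\alpha)}$ in $W(k)$ \cite[Chapter 2, Lemma 5.8]{Scharlau} reduces the right side to $\qform{N_{E/k}(\alpha)}$ in $W(A,\sigma)$. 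Since both sides have rank one, Witt cancellation for hermitian forms (exactly as at the end of Proposition \ref{Sim}) upgrades this Witt-equivalence to an actual isometry $\qform{1}\cong\qform{N_{E/k}(\alpha)}$ over $(A,\sigma)$.

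To pass back up from $E$ to $L$, write $[L:E] = 2t+1$. Because $\alpha \in E$, one computes $N_{L/k}(\alpha) = N_{E/k}(\alpha)^{[L:E]} = N_{E/k}(\alpha)\cdot(N_{E/k}(\alpha)^t)^2$, so $N_{L/k}(\alpha)$ and $N_{E/k}(\alpha)$ differ by a square in $k^*$; hence $\qform{N_{L/k}(\alpha)}\cong\qform{N_{E/k}(\alpha)}$ as rank-one hermitian forms over $(A,\sigma)$. Combining yields $\qform{1}\cong\qform{N_{L/k}(\alpha)}$, and the opening observation produces a similitude $h \in A^*$ with $\mu(h) = N_{L/k}(\alpha)$, completing the proof.

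The main obstacle I anticipate is checking that each of the imported tools (the odd-degree descent of isometries, the identity $s_*r^* = \mathrm{id}$, the Frobenius-type formula, Scharlau's normalization $s_*\qform{\alpha} \sim \qform{N_{E/k}(\alpha)}$, and Witt cancellation for rank-one hermitian forms) applies uniformly across first- and second-kind involutions, since the statement of the lemma places no restriction on the type of $\sigma$.
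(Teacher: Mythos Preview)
Your proposal is correct and follows essentially the same route as the paper: translate the similitude condition into an isometry $\qform{1}\cong\qform{\alpha}$ of rank-one hermitian forms, descend from $L$ to $k(\alpha)$ via Bayer--Lenstra, apply Scharlau's transfer $s_*$ and the Frobenius-type identity to obtain $\qform{1}\sim\qform{N_{k(\alpha)/k}(\alpha)}$ in $W(A,\sigma)$, and finish with Witt cancellation. Your explicit final step (noting that $N_{L/k}(\alpha)$ and $N_{k(\alpha)/k}(\alpha)$ differ by a square in $k^*$) is a welcome addition; the paper's argument stops at $N_{k(\mu(g))/k}(\mu(g))$ and leaves this passage implicit.
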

\begin{proof}

Let $g$ be a similitude of $(A, \sigma)_L$. Let $\mu(g)=\sigma(g)g$ be the multiplier of $g$. By definition, the hermitian form $\qform{\mu(g)}_L$ is isomorphic to $\qform{1}_L$. In particular left multiplication by $g$ gives an explicit isomorphism between the hermitian forms. We may identify $\qform{\mu(g)}_L$ with $\qform{\mu(g)}_L \otimes \qform{1}_L$ in $W(L) \otimes W(A_L, \sigma_L)$. Since $[L:k(\mu(g))]$ is odd and $\qform{\mu(g)}_L \otimes \qform{1}_L \cong \qform{1}_L$  then $\qform{\mu(g)}_{k(\mu(g))} \otimes \qform{1}_{k(\mu(g))} \cong \qform{1}_{k(\mu(g))}$ \cite[Corollary 1.4]{BayerLenstra}. Let $s$ be Scharlau's transfer map from $k(\mu(g)) \to k$ and let $s_*$ be the induced transfer homorphism. Then $s_*(\qform{\mu(g)}_{k(a)} \otimes \qform{1}_{k(a)})$ is Witt equivalent to $\qform{N_{k(\mu(g))/k}(\mu(g))}\otimes \qform{1} $ \cite[Chapter 2, Lemma 5.8]{Scharlau}\cite[p 362]{BayerLenstra}. Since on the other hand $s_*(\qform{1}_{k(\mu(g))}) = \qform{1}$, then  $\qform{N_{k(\mu(g))/k}(\mu(g))} \otimes \qform{1} $ is Witt equivalent to $1$. Since both are rank 1 hermitian forms, it follows from Witt's cancellation that they are in fact isomorphic which gives precisely that $N_{k(\mu(g))/k}(\mu(g))$ is the multiplier of a similitude of $(A, \sigma)$.
\end{proof}

Having established these results we move on to the main result of this section.

\begin{theorem}\label{admeta}
Let $k$ be a field of characteristic different from 2 and $G$ an absolutely simple, adjoint, classical group over $k$.  Let $\{ L_i\}_{1 \leq i \leq m}$ be a set of finite field extensions of k and let the greatest common divisor of the degrees of the extensions $[L_i:k]$ be d. If $d$ is coprime to $S(G)$ the canonical map
\[
H^1(k,G) \rightarrow \displaystyle\prod_{i=1}^m H^1(L_i,G)
\]
has trivial kernel.

\end{theorem}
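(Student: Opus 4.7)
The strategy is a case-by-case analysis following the classification of absolutely simple adjoint classical $k$-groups recalled in Section \ref{SG}. In each case I will embed a relevant portion of $H^1(k,G)$ into an abelian cohomology group via a central short exact sequence of algebraic groups, combining a restriction-corestriction argument on the abelian piece with the odd-degree results already established in Propositions \ref{Sim}, \ref{PSim}, \ref{sub}, and Lemma \ref{norm}. In every case except type $^1A_{n-1}$ we have $2 \in S(G)$, so the coprimality of $d$ with $S(G)$ forces $d$ to be odd; in particular at least one $L_i$ has odd degree over $k$, which is what triggers the odd-degree propositions.

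For $G = PGL_1(A)$ (type $^1A_{n-1}$), the sequence $1 \to G_m \to GL_1(A) \to PGL_1(A) \to 1$ together with Hilbert 90 embeds $H^1(k,G)$ into the $n$-torsion of $H^2(k,G_m) = \mathrm{Br}(k)$; a class in the product kernel is killed by each $[L_i:k]$ by restriction-corestriction, hence by $d$, and coprimality of $d$ and $n$ forces it to vanish. For $G = PGU(A,\sigma)$ (unitary adjoint), the sequence $1 \to R_{K/k}(G_m) \to GU(A,\sigma) \to PGU(A,\sigma) \to 1$ together with Shapiro's lemma lands the connecting obstruction in the $n$-torsion of $\mathrm{Br}(K)$; restriction-corestriction kills this image by coprimality of $d$ and $n$, leaving a class in $H^1(k, GU(A,\sigma))$, and Proposition \ref{Sim} applied to an odd-degree $L_i$ concludes. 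For $G = PGSp(A,\sigma)$ (symplectic adjoint) and $G = O^+(A,\sigma) = PGO^+(A,\sigma)$ (type $B_n$, with $A$ of odd degree and trivial center), Proposition \ref{PSim} (respectively \ref{sub}) applied directly to an odd-degree $L_i$ already gives trivial kernel.

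The subtlest case is type $D_n$ adjoint, $G = PGO^+(A,\sigma)$ with $A$ of even degree. Here Proposition \ref{PSim} only handles the ambient $PGO(A,\sigma)$, so the plan is to use the exact sequence $1 \to PGO^+(A,\sigma) \to PGO(A,\sigma) \to \mathbb{Z}/2\mathbb{Z} \to 1$ cut out by the Dickson invariant. Given $\lambda$ in the product kernel, its image in $H^1(k, PGO(A,\sigma))$ lies in the kernel of restriction to an odd-degree $L_i$, hence is trivial by \ref{PSim}, and so $\lambda$ lies in the image of the connecting map $\delta : \mathbb{Z}/2\mathbb{Z} \to H^1(k, PGO^+(A,\sigma))$. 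Triviality of $\lambda$ after restriction to $L_i$ then forces the corresponding class $\epsilon \in \mathbb{Z}/2\mathbb{Z}$ to lie in the image of $PGO(L_i) \to \mathbb{Z}/2\mathbb{Z}$, i.e.\ forces the existence of an improper projective similitude of $(A,\sigma)_{L_i}$.

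The main obstacle is precisely this last step: descending an improper similitude from $L_i$ back to $k$. This is where Lemma \ref{norm} enters: applying the multiplier norm principle to the rank one hermitian form attached to an improper similitude over $L_i$, and combining with Springer-type odd-degree descent for the $\mu_2$-valued invariant that governs the existence of improper similitudes, one obtains an improper similitude over $k$. Consequently $\delta(\epsilon) = \mathrm{point}$, and $\lambda$ itself is trivial, completing the type $D_n$ case and hence the theorem.
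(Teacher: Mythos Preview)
Your plan is correct and follows the paper's case-by-case strategy almost verbatim for types $^1A_{n-1}$, the unitary case, the symplectic case, and type $B_n$. The only genuine deviation is in type $D_n$: the paper first establishes the result for $GO^+(A,\sigma)$ (Proposition~\ref{GO+}) via the sequence $1\to GO^+\to GO\to\mu_2\to 1$ and then passes to $PGO^+$ via $1\to G_m\to GO^+\to PGO^+\to 1$ using the 2-torsion Brauer obstruction, whereas you work directly with $1\to PGO^+\to PGO\to\mu_2\to 1$ and invoke Proposition~\ref{PSim} for $PGO$. Both routes collapse to the same crux, namely descending an improper similitude through an odd-degree extension, and your shortcut saves the intermediate statement about $GO^+$ at no real cost.

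One point to tighten: your description of that descent step (``Springer-type odd-degree descent for the $\mu_2$-valued invariant'') is vague and not quite the mechanism. What you actually need is Lemma~\ref{improp}, whose proof is not a Springer-type argument but rather the Merkurjev--Tignol criterion that a similitude $g$ is improper iff $A$ is Brauer equivalent to the quaternion algebra $(\mathrm{disc}\,\sigma,\mu(g))$, combined with corestriction and Lemma~\ref{norm}. You should also separate the split and non-split cases of $A$ explicitly (as the paper does in Proposition~\ref{GO+}): when $A$ is split, hyperplane reflections already give improper similitudes over $k$, so $\delta$ has trivial image and there is nothing to descend; Lemma~\ref{improp} is stated only for $A$ non-split.
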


The symplectic case $G=PGSp(A, \sigma)$ is just a special case of \ref{PSim} above. So to prove \ref{admeta} it is enough to consider the group of type $^1A_{n-1}$, the orthogonal case and the unitary case. 

\subsection*{Type $^1A_{n-1}$}

\begin{theorem}\label{PGL1}
Let $k$ be a field of characteristic different from 2, $A$ a central simple algebra of degree $n$ over $k$ and $G=PGL_1(A)$. Let $\{ L_i\}_{1 \leq i \leq m}$ be a set of finite field extensions of k and let  $\gcd([L_i:k])=d$. If $d$ is coprime to $n$ then the canonical map
\[
H^1(k,G) \rightarrow \displaystyle\prod_{i=1}^m H^1(L_i,G)
\]
has trivial kernel.
\end{theorem}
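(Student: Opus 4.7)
The plan is to exploit the short exact sequence
\[
1 \to \mathbf{G}_m \to GL_1(A) \to PGL_1(A) \to 1
\]
together with Hilbert 90, which gives, via the connecting map $\delta$, an injection $H^1(k,PGL_1(A)) \hookrightarrow H^2(k,\mathbf{G}_m) = Br(k)$ sending a class $[A']$ (a central simple $k$-algebra of degree $n$, since $H^1(k,PGL_1(A))$ parametrizes such twisted forms of $A$) to $[A']\cdot[A]^{-1}$. This is precisely what is cited in the proof of \ref{PSim} from \cite[pg 405]{KMRT}. Because $A'$ and $A$ both have degree $n$, their Brauer classes are killed by $n$, so the image of $\delta$ lies inside the $n$-torsion subgroup $Br(k)[n]$.

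Next, I set up the standard restriction square: taking $\lambda \in H^1(k,PGL_1(A))$ in the kernel of $H^1(k,G) \to \prod H^1(L_i,G)$, the class $\alpha := \delta(\lambda) \in Br(k)[n]$ restricts to zero in $Br(L_i)$ for each $i$. The restriction-corestriction identity $\mathrm{cor}\circ\mathrm{res} = [L_i:k]$ applied in $Br(k)$ then gives $[L_i:k]\cdot \alpha = 0$ for every $i$. Writing $d = \sum m_i n_i$ with $m_i = [L_i:k]$ and $n_i \in \mathbb{Z}$, this yields $d\cdot \alpha = 0$. Since $\alpha$ is also $n$-torsion and $\gcd(d,n)=1$ (by the hypothesis that $d$ is coprime to $S(G)$, which for type $^1A_{n-1}$ consists of the prime divisors of $n$), we conclude $\alpha = 0$, i.e. $[A'] = [A]$ in $Br(k)$.

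Finally, two central simple $k$-algebras of the same degree lying in the same Brauer class are isomorphic by Wedderburn (both equal $M_r(D)$ for the same division algebra $D$ and the same $r$). Hence $\lambda$, viewed as the isomorphism class of $A'$, coincides with the point of $H^1(k,PGL_1(A))$ corresponding to $A$, proving triviality of the kernel.

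I expect no serious obstacle; the argument is essentially a direct application of restriction-corestriction in the Brauer group together with the identification of $H^1(k,PGL_1(A))$ with central simple algebras of degree $n$. The only point requiring minor care is the explicit description of $\delta$ (which is already recorded in the proof of \ref{PSim}) and the observation that the image of $\delta$ lies in $Br(k)[n]$, which is what allows the coprimality of $d$ and $n$ to force $\alpha = 0$.
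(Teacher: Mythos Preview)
Your proposal is correct and follows essentially the same route as the paper: both use the exact sequence $1 \to \mathbf{G}_m \to GL_1(A) \to PGL_1(A) \to 1$ with Hilbert~90 to embed $H^1(k,PGL_1(A))$ into the Brauer group via $\delta(A') = [A'][A]^{-1}$, show that this class is killed both by $d$ (via restriction--corestriction over the $L_i$) and by a power of $n$, and conclude by coprimality and Wedderburn. The only cosmetic difference is that you observe directly that $\delta(A') \in Br(k)[n]$ (since $[A]$ and $[A']$ each have exponent dividing $n$), whereas the paper bounds the exponent by the degree $n^2$ of $A' \otimes A^o$; either bound suffices once $\gcd(d,n)=1$.
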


\begin{proof}
Consider the short exact sequence
\begin{equation} \label{PGL1SES}
\noindent\
\xymatrix{
 1 \ar[r] &G_{\mathbf{m}} \ar[r] &GL_1(A) \ar[r] &PGL_1(A) \ar[r] &1
}
\end{equation}

\noindent
Since Hilbert's Theorem 90 gives $H^1(k,GL_1(A)) =1$, the induced long exact sequences in Galois Cohomology produces the following commutative diagram with exact rows.

\begin{equation} \label{PGL1LES}
\xymatrix{
 1 \ar[r] &H^1(k,PGL_1(A)) \ar[r]^-\delta \ar[d]^-f &H^2(k, G_{\mathbf{m}}) \ar[d]_-g\\
 1 \ar[r] &\prod H^1(L_i,PGL_1(A)) \ar[r]^-\delta  &\prod H^2(L_i, G_{\mathbf{m}})}
\end{equation}

\bigskip

\noindent
The pointed set $H^1(k,PGL_1(A))$ classifies isomorphism classes of central simple algebras of degree $n$ over $k$ and for $B \in H^1(k,PGL_1(A))$, $\delta(B) = [B][A]^{-1}$. Choose $B \in \ker(f)$. By commutativity of the diagram, $g(\delta(B)) = \point$ in $\prod H^2(L_i,G_m)$.

Let $A^o$ denote the opposite algebra of $A$ and choose $B \otimes A^{o}$ a representative for the class $[B][A]^{-1}$ in $H^2(k,G_m)$. Let the exponent of $B \otimes A^o$ be $s$. Since by assumption $B \otimes A^o$ splits over each $L_i$, $s$ divides each $[L_i:k]$. It follows that $s$ divides $d$. Since the degree of $B \otimes A^{o}$ is $n^2$, $s$ divides $n^2$.

Since by assumption $n$ and $d$ are coprime, $s=1$, $B \otimes A^o$ is split and $B$ is Brauer equivalent to $A$. Then since $B$ and $A$ are of the same degree, they are isomorphic and $B$ is the point in $H^1(k,PGL_1(A))$.
\end{proof}

\subsection*{The unitary case}

\begin{theorem} \label{PGU}
Let $A$ be a central simple algebra of degree $n$ over $K$ with $n \geq 2$, Let $\sigma$ be a unitary involution on $A$.  Let $k$ be the subfield of elements of $K$ fixed by $\sigma$ and $G = PGU(A,\sigma)$. Let $\{L_i\}_{1 \leq i \leq m}$ be a set of finite field extensions of $k$ and let $\gcd([L_i:k])=d$. If $d$ is odd and coprime to $n$ then the canonical map
\[
H^1(k,G) \rightarrow \prod_{i=1}^m H^1(L_i,G)
\]
has trivial kernel.
\end{theorem}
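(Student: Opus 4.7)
The plan is to follow the template of Theorem \ref{PGL1}, using the Brauer-group machinery to kill the underlying algebra and then invoking Proposition \ref{Sim} to handle the involution. The starting point is the short exact sequence of algebraic $k$-groups
\begin{equation*}
1 \to R_{K/k}(G_m) \to GU(A, \sigma) \xrightarrow{\eta} PGU(A, \sigma) \to 1,
\end{equation*}
in which the center $R_{K/k}(G_m)$ is the center of $GU(A, \sigma)$. Since Shapiro's lemma and Hilbert's Theorem 90 give $H^1(k, R_{K/k}(G_m)) = H^1(K, G_m) = 1$, the induced long exact sequence yields an exact sequence of pointed sets
\begin{equation*}
1 \to H^1(k, GU(A, \sigma)) \xrightarrow{\eta} H^1(k, PGU(A, \sigma)) \xrightarrow{\delta} H^2(K, G_m),
\end{equation*}
and under the standard interpretation of $H^1(k, PGU(A,\sigma))$ as classes of pairs $(A', \sigma')$ of the same degree and involution kind as $(A, \sigma)$, the map $\delta$ sends $(A', \sigma')$ to $[A'][A]^{-1}$.

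Given $(A', \sigma') \in \ker g$, commutativity of $\delta$ with restriction shows that $[A'][A]^{-1}$ has trivial image in $H^2(K \otimes_k L_i, G_m)$ for every $i$. Restriction-corestriction in the Brauer group then forces the order $s$ of $[A'][A]^{-1}$ to divide $[K \otimes_k L_i : K] = [L_i:k]$ for each $i$, hence $s \mid d$. On the other hand $A$ and $A'$ both have degree $n$ over $K$, so $s \mid n$. Since $\gcd(d,n) = 1$ by hypothesis, $s = 1$, so $\delta((A', \sigma')) = \point$, and by exactness there is $\lambda' \in H^1(k, GU(A, \sigma))$ with $\eta(\lambda') = (A', \sigma')$.

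To finish, choose an index $i$ with $[L_i:k]$ odd, which is possible since $d$ is odd, and write $L = L_i$. A chase of the commutative square associated with $\eta$ under restriction to $L$, together with the trivial kernel of $\eta$ over $L$ (same Shapiro/Hilbert 90 argument), forces the image of $\lambda'$ in $H^1(L, GU(A, \sigma))$ to be the point. Since $GU(A, \sigma)$ is precisely the group of similitudes of $(A, \sigma)$ and $[L:k]$ is odd, Proposition \ref{Sim} gives $\lambda' = \point$, and hence $(A', \sigma') = \eta(\lambda') = \point$ in $H^1(k, PGU(A, \sigma))$. The main technical point to verify is the identification $\delta((A', \sigma')) = [A'][A]^{-1}$, the unitary analogue of the Brauer-class computation already used in \ref{PGL1} and \ref{PSim}; once this is in hand, the odd-degree hypothesis on $d$ enters only in order to invoke Proposition \ref{Sim} at the final step.
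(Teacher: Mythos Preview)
Your argument is correct and follows essentially the same route as the paper: the central extension $1 \to R_{K/k}G_m \to GU(A,\sigma) \to PGU(A,\sigma) \to 1$, Shapiro plus Hilbert~90 to inject $H^1(k,GU)$ into $H^1(k,PGU)$, a restriction--corestriction argument in the Brauer group to kill $\delta$, and then Proposition~\ref{Sim} over a single odd-degree $L_i$ to finish. Two cosmetic remarks: the paper records elements of $H^1(k,PGU(A,\sigma))$ as triples $(A',\sigma',\phi')$ with $\phi':K'\xrightarrow{\sim}K$, so that $\delta$ lands on $[A'\otimes_{K'}K][A]^{-1}$ rather than literally $[A'][A]^{-1}$; and the paper bounds the exponent by $n^2$ (the degree of $A'\otimes A^{\mathrm{op}}$) rather than $n$, which is immaterial since $\gcd(d,n)=1$ iff $\gcd(d,n^2)=1$.
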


\begin{proof}
We have the short exact sequence 
\begin{equation} \label{PGUSES}
\xymatrix{
 1 \ar[r] &R_{K/k}G_m \ar[r] &GU(A, \sigma) \ar[r] &PGU(A, \sigma) \ar[r] &1
}
\end{equation}

\noindent
Shapiro's Lemma gives $H^1(k, R_{K/k}G_{\mathbf{m}}) \cong H^1(K, G_{\mathbf{m}})$ and the latter is trivial. Therefore, \eqref{PGUSES} induces the following commutative diagram with exact rows.

\begin{equation} \label{PGULES}
\xymatrix{
1 \ar[r] &H^1(k, GU(A, \sigma)) \ar[r]^-{\pi} \ar[d]^-f &H^1(k, PGU(A, \sigma)) \ar[r]^-{\delta} \ar[d]^-g &H^2(k, R_{K/k}G_m) \ar[d]^-h\\
1 \ar[r] &\prod H^1(L_i, GU(A, \sigma)) \ar[r]^-\pi & \prod H^1(L_i, PGU(A, \sigma)) \ar[r] &\prod H^2(L_i, R_{K/k}G_m) 
 }
\end{equation}

Now $H^1(k, PGU(A, \sigma))$ is the set of isomorphism classes of triples $(A^{\prime},\sigma^{\prime}, \phi^{\prime} )$ where $A^{\prime}$ is a central simple algebra over a field $K^{\prime}$ which is a quadratic extension of $k$, the degree of $A^{\prime}$ over $K^\prime$ is $n$, $\sigma^{\prime}$ is a unitary involution on $A^{\prime}$ with $(K^\prime)^{\sigma^{\prime}}=k$, and $\phi^{\prime}$ is an isomorphism from $K^{\prime}$ to $K$.\cite[pg. 400]{KMRT}. 

Choose $(A^{\prime},\sigma^{\prime}, \phi^{\prime}) \in \ker(g)$. Now $\delta(A^{\prime},\sigma^{\prime}, \phi^{\prime}) = [A^\prime \otimes_{K^\prime} K][A]^{-1}$. Let $A^o$ denote the opposite algebra of $A$ and choose $(A^\prime \otimes_{K^\prime} K) \otimes_K A^o$ a representative of $[A^\prime \otimes_{K^\prime} K][A]^{-1}$ in $H^2(k,R_{K/k}G_m)$. Let the exponent of $A\prime \otimes_{K^\prime} K) \otimes_K A^o$ be $t$.

By commutativity of the rightmost rectangle of \eqref{PGULES}, $h(\delta(A^{\prime},\sigma^{\prime}, \phi^{\prime})) = \point$. In particular, a restriction-corestriction argument gives that $m_i \cdot ((A\prime \otimes_{K^\prime} K) \otimes_K A^o)$ is split for each $m_i = [L_i:k]$. Then $t$ divides each $m_i$ and in turn  $t$ divides $d$. 

On the other hand, $t$ divides the degree of $(A^\prime \otimes_{K^\prime} K) \otimes_K A^o$ which is $n^2$. Since $d$ and $n^2$ are by assumption coprime, we find that the exponent of $t$ is 1. Thus $(A\prime \otimes_{K^\prime} K) \otimes_K A^o$ is the point in $H^2(k, R_{K/k}G_m)$. Exactness of the top row of \eqref{PGULES} gives a $\lambda \in H^1(k, GU(A, \sigma))$ such that $\pi(\lambda) = (A^{\prime},\sigma^{\prime}, \phi^{\prime})$. Commutativity of the left rectangle in \eqref{PGULES} gives $\pi(f(\lambda)) = \text{point}$. And thus by the exactness of the bottom row, $f(A^{\prime},\sigma^{\prime}, \phi^{\prime}) = \text{point}$. It follows from \ref{Sim} that $\lambda = \point$ and thus $(A^{\prime},\sigma^{\prime}, \phi^{\prime}) = \point.$
\end{proof}

\medskip

\subsection*{The orthogonal case}~\\

\noindent
The case in which $G$ is of type $B_n$ is a special case of \ref{sub}. For the case in which $G$ is of type $D_n$ we will need the following result on the existence of improper similitudes. 

\begin{lemma} \label{improp}
Let $k$ be a field of characteristic different from 2 and $A$ a central simple algebra over $k$ of even degree at least 4 with an orthogonal involution $\sigma$. Let $L$ be a finite field extension of $k$ of odd degree. If $A$ is not split, then $GO^-(A, \sigma)(k)$ nonempty if and only if  $GO^-(A, \sigma)(L)$ nonempty.
\end{lemma}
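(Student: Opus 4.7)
The forward direction is immediate: an improper similitude over $k$ remains improper over $L$. For the converse, set $n = \deg A$ and consider the short exact sequence of $k$-groups
\[
1 \to GO^+(A,\sigma) \to GO(A,\sigma) \xrightarrow{\rho} \mu_2 \to 1, \qquad \rho(g) = \Nrd(g)\mu(g)^{-n/2}.
\]
Passing to Galois cohomology over any field $F/k$, one sees that $GO^-(A,\sigma)(F)$ is nonempty if and only if the connecting class $\partial_F(-1) \in H^1(F, GO^+(A,\sigma))$ is trivial. So the task is to deduce $\partial_k(-1) = 1$ from $\partial_L(-1) = 1$.

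Hilbert's Theorem 90 applied to $1 \to \mathbb{G}_m \to GO^+(A,\sigma) \to PGO^+(A,\sigma) \to 1$ injects $H^1(F, GO^+(A,\sigma))$ into $H^1(F, PGO^+(A,\sigma))$ and sends $\partial_F(-1)$ to the analogous connecting class $\delta_F(-1)$ arising from $1 \to PGO^+(A,\sigma) \to PGO(A,\sigma) \to \mu_2 \to 1$. Since $PGO^+(A,\sigma)$ is the kernel of the natural action of $PGO(A,\sigma)$ on the center $Z(C(A,\sigma))$ of the Clifford algebra, the triviality of $\delta_k(-1)$ is equivalent to the existence of a $k$-rational element of $PGO(A,\sigma)$ inducing the nontrivial $k$-automorphism of $Z(C(A,\sigma))$.

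The hard part is to descend this obstruction along $L/k$. Using the Clifford algebra structure theory \cite[Ch. IV]{KMRT}, the group $PGO^+(A,\sigma)$ is closely tied to adjoint classical groups of the Clifford algebra: when $\mathrm{disc}(\sigma) \in (k^*)^2$ and $C(A,\sigma) = C^+ \times C^-$, $PGO^+(A,\sigma)$ arises as a central-isogeny quotient involving the adjoint classical groups of the components $(C^{\pm}, \underline{\sigma}^{\pm})$; when $\mathrm{disc}(\sigma) \notin (k^*)^2$ and $Z = k(\sqrt{\mathrm{disc}(\sigma)})$, one has $PGO^+(A,\sigma) \cong R_{Z/k}(G)$ for $G$ an adjoint classical group of $(C(A,\sigma), \underline{\sigma})$ over $Z$. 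In either case, $H^1(k, PGO^+(A,\sigma))$ is controlled (via Shapiro's Lemma in the nontrivial-discriminant case) by $H^1$ of a projective similitude group of a central simple algebra with involution of the first kind, to which Proposition \ref{PSim} applies over the odd-degree extension $L/k$ (respectively $L \otimes_k Z / Z$, which is still of odd degree). Any residual obstruction coming from the central isogeny in the trivial-discriminant case is $2$-torsion and descends similarly under odd-degree extensions. Consequently $\delta_k(-1) = 1$, yielding $GO^-(A,\sigma)(k) \neq \emptyset$.
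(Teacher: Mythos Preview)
Your cohomological setup is fine: reducing the question to the triviality of $\partial_k(-1)$ in $H^1(k,GO^+(A,\sigma))$, and then pushing into $H^1(k,PGO^+(A,\sigma))$ via Hilbert~90, is correct. The gap is in the next step. The asserted identification of $PGO^+(A,\sigma)$ with (a Weil restriction of) an adjoint classical group of $(C(A,\sigma),\underline{\sigma})$ is not true in general. For $(A,\sigma)$ of degree $2n$, $PGO^+(A,\sigma)$ is a group of type $D_n$ of dimension $n(2n-1)$, while $C(A,\sigma)$ has degree $2^{n-1}$ over its center, so the projective similitude group of $(C(A,\sigma),\underline{\sigma})$ has the wrong dimension as soon as $n\geq 5$. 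The isomorphisms you have in mind are the low-degree exceptional ones ($D_2\cong A_1\times A_1$, $D_3\cong A_3$, and $D_4$ triality); they do not extend. Without that identification you cannot invoke Proposition~\ref{PSim}, and simply appealing to a trivial-kernel statement for $H^1(k,GO^+)\to H^1(L,GO^+)$ or for $H^1(k,PGO^+)\to H^1(L,PGO^+)$ is circular: those are Propositions~\ref{GO+} and~\ref{PGO+}, both of which rely on the present lemma.

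The paper's argument avoids cohomology of $GO^+$ and $PGO^+$ entirely. Given an improper similitude $g\in GO^-(A,\sigma)(L)$ with multiplier $\mu(g)$, one has $A_L$ Brauer equivalent to the quaternion algebra $(\delta,\mu(g))_L$ with $\delta=\mathrm{disc}(\sigma)$ \cite[Theorem~13.38]{KMRT}. Taking corestriction and using that $A$ is $2$-torsion while $[L:k]$ is odd gives $A\sim(\delta,N_{L/k}(\mu(g)))$ over $k$. By Lemma~\ref{norm}, $N_{L/k}(\mu(g))=\mu(g')$ for some similitude $g'$ over $k$; if $g'$ were proper, $(\delta,\mu(g'))$ would split, contradicting $A$ not split. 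Hence $g'\in GO^-(A,\sigma)(k)$. This is a two-line Brauer-group computation rather than a descent of cohomology classes.
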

 
\begin{proof} 

If $g \in A$ is an improper similitude of $A$ over $k$, then certainly $g_L$ is an improper similitude of $A_L$ over $L$. Conversely, choose $g \in A_L$ an improper similitude of $A_L$ over $L$ and let $\sigma(g)g = \mu(g)$. Then $A_L$ Brauer equivalent to the quaternion algebra $(\delta, \mu(g))$ over $L$ where $\delta$ is the discriminant of $\sigma$ \cite[Theorem 13.38]{KMRT} . From this we find $\cor(A_L)$ Brauer equivalent to $\cor((\delta, \mu(g)))$. Now $\res: H^2(k, \mu_2) \to H^2(L, \mu_2)$ certainly takes $A$ to $A_L$ and $\cor(\res(A)) = A$ since $A$ is 2-torsion and $[L:k]$ is odd. On the other hand, $\cor((\delta, \mu(g)))=(\delta, N_{L/k}(\mu(g)))$. By \ref{norm} write $N_{L/k}(\mu(g))$ as $\mu(g^{\prime})$ for $g^{\prime}$ a similitude of $A$ over $k$. Thus $A$ is Brauer equivalent to $(\delta, \mu(g^{\prime}))$. If $g^{\prime}$ is a proper similitude then by \cite[Proposition 13.38]{KMRT} $(\delta, \mu(g^{\prime}))$ splits. But then $A$ splits and we arrive at a contradiction. So $g^{\prime}$ is an improper similitude of $A$ over $k$. 

 \end{proof} 

\begin{proposition}\label{GO+}
Let $k$ be a field of characteristic different from 2 and $A$ a central simple algebra over $k$ of even degree at least 4 with an orthogonal involution $\sigma$. Let  $G=GO^+(A, \sigma)$ and let $L$ be a finite field extension of $k$ of odd degree. Then the canonical map
\[
H^1(k, G) \rightarrow H^1(L, G)
\]
 has trivial kernel. 
\end{proposition}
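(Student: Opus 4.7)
The plan is to mimic the strategy used in Proposition \ref{sub} and Theorem \ref{Spinh}, namely to fit $GO^+(A,\sigma)$ into a short exact sequence with $GO(A,\sigma)$ (for which restriction has trivial kernel by Proposition \ref{Sim}) and $\mu_2$, and then to chase a diagram. The relevant sequence is
\[
1 \to GO^+(A,\sigma) \to GO(A,\sigma) \xrightarrow{\;\chi\;} \mu_2 \to 1,
\]
where $\chi$ sends a similitude $a$ to $\Nrd(a)\cdot \mu(a)^{-\deg(A)/2}$; this lies in $\mu_2$ because for an involution of the first kind $\Nrd(a)^2 = \Nrd(\sigma(a)a) = \mu(a)^{\deg(A)}$, and its kernel is exactly $GO^+(A,\sigma)$ by definition.

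Passing to Galois cohomology produces a commutative diagram
\[
\xymatrix{
GO(A,\sigma)(k) \ar[r]^-{\chi} \ar[d] & \mu_2 \ar[r]^-{\delta} \ar[d]^{=} & H^1(k,GO^+(A,\sigma)) \ar[r] \ar[d]^-f & H^1(k,GO(A,\sigma)) \ar[d]^-g \\
GO(A,\sigma)(L) \ar[r]^-{\chi} & \mu_2 \ar[r]^-{\delta} & H^1(L,GO^+(A,\sigma)) \ar[r] & H^1(L,GO(A,\sigma))
}
\]
with exact rows. Given $\lambda \in \ker(f)$, commutativity of the right-hand square and Proposition \ref{Sim} (applied to the group of similitudes $GO(A,\sigma)$) force the image of $\lambda$ in $H^1(k,GO(A,\sigma))$ to be the distinguished point. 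By exactness of the top row there exists $\epsilon \in \mu_2 = \{\pm 1\}$ with $\delta(\epsilon) = \lambda$. If $\epsilon = 1$ we are done, so it remains to handle $\epsilon = -1$.

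In the latter case, commutativity of the middle square and the fact that $f(\lambda) = \textrm{point}$ mean that $\delta(-1)$ is trivial in $H^1(L, GO^+(A,\sigma))$, so by exactness of the bottom row the element $-1$ lies in the image of $\chi\colon GO(A,\sigma)(L)\to\mu_2$. In other words, $(A,\sigma)_L$ admits an improper similitude. To finish, we need to produce an improper similitude over $k$, so that $\lambda = \delta(-1)$ is already the distinguished point over $k$. This is where I split into cases according to whether $A$ is split or not. If $A$ is split, then $(A,\sigma) \cong (\End_k V, \textrm{adj}_q)$ for some non-degenerate quadratic form $q$ of dimension $\geq 4$, and any reflection in an anisotropic vector of $q$ gives an element of $GO^-(A,\sigma)(k)$, so improper similitudes always exist. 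If $A$ is not split, then Lemma \ref{improp} applies directly and transfers the improper similitude from $L$ down to $k$.

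The conceptual content of the argument is light once one has Proposition \ref{Sim} and Lemma \ref{improp}; the main technical obstacle is really already hidden in the earlier Lemma \ref{improp}, whose proof invokes the discriminant-quaternion description of improper similitudes together with the norm principle for multipliers. The only subtlety at the diagram-chase level is making sure to dispatch the split case by hand, since Lemma \ref{improp} is only stated for non-split $A$.
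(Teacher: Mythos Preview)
Your argument is correct and follows the same strategy as the paper: use the exact sequence $1\to GO^+\to GO\to\mu_2\to 1$, invoke Proposition~\ref{Sim} for $GO$, and then use Lemma~\ref{improp} (or reflections in the split case) to handle the $\mu_2$-obstruction. The only difference is organizational: the paper splits into cases up front according to whether $\chi$ is surjective on $k$-points (yielding two different-looking diagrams), whereas you run a single diagram chase and defer the case split until you need to descend the improper similitude; your packaging is slightly cleaner but the content is identical.
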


\begin{proof}
Consider the short exact sequence 
\begin{equation} \label{GO+SES}
\xymatrix{
1 \ar[r] &GO^+(A, \sigma) \ar[r]^-i &GO(A, \sigma) \ar[r]^-{\eta} &\mu_2 \ar[r] &1}
\end{equation}

\noindent
where the map $\eta$ takes $a \in GO(A, \sigma)$ to $1$ if $\Nrd(a) = \mu(a)^{\text{deg}(A)/2}$ and $\eta^{-1}(-1)$ is precisely $GO^-(A, \sigma)$.
 
 In the case $A$ is split, each hyperplane reflection gives an improper similitude. Thus $GO(A, \sigma)(k) \rightarrow \mu_2$ is onto and \eqref{GO+SES} induces the following commutative diagram with exact rows.

\begin{equation}\label{GO+LES1}
\xymatrix{
1 \ar[r] &H^1(k, GO^+(A,\sigma)) \ar[r]^i \ar[d]^-g &H^1(k, GO(A, \sigma)) \ar[d]^-h\\
1 \ar[r]  &H^1(L, GO^+(A, \sigma)) \ar[r]^i &H^1(L, GO(A,\sigma))}
\end{equation}.

Choose $\lambda \in \ker(g)$.  Since the diagram \eqref{GO+LES1} commutes and $h$ has trivial kernel by \ref{Sim}, $i(\lambda) =$point. Then exactness of the top row of \eqref{GO+LES1} gives $\lambda=$ point.

In the case $A$ is not split, we need only consider two scenarios. Firstly, suppose $A$ and $A_L$ both admit improper similitudes. Then $GO(A, \sigma)(k) \rightarrow \mu_2$ and $GO(A, \sigma)(L) \rightarrow \mu_2$ are both onto and the proof proceeds exactly as in the split case. Otherwise, by \ref{improp} neither admits an improper similitude. That is $GO^+(A, \sigma)(k) =GO(A, \sigma)(k)$, $GO^+(A, \sigma)(L) =GO(A, \sigma)(L)$ and \eqref{GO+SES} induces the following commutative diagram with exact rows. 

\begin{equation}\label{GO+LES2}
\xymatrix{
1 \ar[r] &\mu_2 \ar[r]^-{\delta} \ar[d]^f &H^1(k, GO^+(A,\sigma)) \ar[r]^i \ar[d]^-g &H^1(k, GO(A, \sigma)) \ar[d]^-h\\
1 \ar[r]  &\mu_2 \ar[r]^-{\delta} &H^1(L, GO^+(A, \sigma)) \ar[r]^i &H^1(L, GO(A,\sigma))}
\end{equation}.

Choose $\lambda \in \ker(g)$. Commutativity of the rightmost rectangle in \ref{GO+LES2} gives $i(\lambda) \in \ker(h)$. But by \ref{Sim}, this gives $i(\lambda)= \point$. Then, by exactness of the top row of \eqref{GO+LES2}, $\exists \lambda^{\prime} \in \mu_2$ such that $\delta(\lambda^{\prime}) = \lambda$. Commutativity of the left rectangle in \eqref{GO+LES2} gives $\delta(f(\lambda^{\prime})) =$ point. From whence, since the bottom row of \eqref{GO+LES2} is exact we find $f(\lambda^{\prime}) = 1$. But certainly $f$ is the identity map. So in fact $\lambda^{\prime} = 1$ and in turn, $\lambda = \delta(\lambda^{\prime})=$ point. 
\end{proof}

We may now prove \ref{admeta} for the absolutely simple group in the orthogonal case.

\begin{theorem}\label{PGO+}
Let $k$ be a field of characteristic different from 2 and $A$ a central simple algebra over $k$ of degree at least 4 with an orthogonal involution $\sigma$. Let $G=PGO^+(A, \sigma)$ and let $L$ be a finite field extensions of k of odd degree. Then the canonical map
\[
H^1(k, G) \to H^1(L, G)
\]
 has trivial kernel. 
\end{theorem}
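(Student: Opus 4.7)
The plan is to run the same central-extension strategy that was used for $PGL_1$ and $PGU$ (Theorems \ref{PGL1} and \ref{PGU}), but now reducing the problem to Proposition \ref{GO+} on $GO^+(A,\sigma)$ rather than to Proposition \ref{Sim} on $GU(A,\sigma)$. Concretely, take the short exact sequence
\[
1 \to G_m \to GO^+(A,\sigma) \to PGO^+(A,\sigma) \to 1,
\]
coming from the quotient by the center of $GO^+(A,\sigma)$. Since $H^1(k,G_m) = 1$ by Hilbert's Theorem 90, the induced long exact sequence of pointed sets begins with $1 \to H^1(k,GO^+(A,\sigma)) \xrightarrow{\pi} H^1(k,PGO^+(A,\sigma)) \xrightarrow{\delta} H^2(k,G_m)$, and likewise over $L$. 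Restriction fits these into a commutative diagram with vertical maps $f$, $g$, $h$ in the three columns.

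Now I would take $(A',\sigma') \in \ker(g)$. Just as in the proof of \ref{PGL1}, the boundary map is $\delta(A',\sigma') = [A'][A]^{-1}$, which by \cite[Theorem 3.1]{KMRT} is $2$-torsion in the Brauer group. Commutativity of the right rectangle gives $h(\delta(A',\sigma')) = \point$, i.e.\ $\delta(A',\sigma')$ lies in the kernel of $h$. Since $[L:k]$ is odd and $\delta(A',\sigma')$ is $2$-torsion, the restriction-corestriction argument forces $\delta(A',\sigma') = \point$ in $H^2(k,G_m)$.

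By exactness of the top row, choose $\lambda' \in H^1(k,GO^+(A,\sigma))$ with $\pi(\lambda') = (A',\sigma')$. Commutativity of the left rectangle yields $\pi(f(\lambda')) = \point$, and since $\pi$ has trivial kernel over $L$ (Hilbert 90 applied over $L$), this gives $f(\lambda') = \point$. At this point, the hard work has already been done in the preceding section: Proposition \ref{GO+} tells us that the kernel of $f : H^1(k,GO^+(A,\sigma)) \to H^1(L,GO^+(A,\sigma))$ is trivial, so $\lambda' = \point$, and therefore $(A',\sigma') = \pi(\lambda') = \point$.

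The only delicate step is the second one, where one has to be sure that odd-degree restriction is injective on the image of $\delta$; the reason it works is precisely that this image lands in the $2$-torsion of $\mathrm{Br}(k)$. Beyond that, everything is a diagram chase, and the main substance of the theorem is really packaged into Proposition \ref{GO+}, whose proof relied on \ref{Sim} and \ref{improp}.
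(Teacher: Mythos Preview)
Your proof is correct and follows essentially the same approach as the paper: the same central extension $1 \to G_m \to GO^+(A,\sigma) \to PGO^+(A,\sigma) \to 1$, the same Brauer-group 2-torsion argument to kill $\delta$, and the same reduction to Proposition \ref{GO+}. The only cosmetic difference is that the paper records the classification of $H^1(k,PGO^+(A,\sigma))$ as triples $(A',\sigma',\phi')$ with $\phi'$ an isomorphism of Clifford-algebra centers, rather than pairs, but this does not affect the argument.
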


\begin{proof}
Consider the short exact sequence

\begin{equation}\label{PGO+SES}
\xymatrix{
 1 \ar[r] &G_m \ar[r] &GO^+(A, \sigma) \ar[r]^-{\eta} &PGO^+(A, \sigma)  \ar[r] &1
}
\end{equation}

\noindent
by Hilbert's Theorem 90, this induces the following commutative diagram with exact rows. 

\begin{equation} \label{PGO+LES}
\xymatrix{
1 \ar[r] &H^1(k,GO^+(A, \sigma)) \ar[r]^-{\eta} \ar[d]^-f &H^1(k,PGO^+(A, \sigma)) \ar[r]^-{\delta} \ar[d]^-g &H^2(k,G_m) \ar[d]^-h\\
1 \ar[r] & H^1(L,GO^+(A, \sigma)) \ar[r]^-{\eta} & H^1(L,PGO^+(A, \sigma)) \ar[r] & H^2(L, G_m)
}
\end{equation}

$H^1(k,PGO^+(A, \sigma))$ classifies $k$-isomorphism classes of triples $(A^\prime, \sigma^\prime, \phi^\prime)$ where $A^\prime$ is a central simple algebra over $k$ of the same degree as $A$, $\sigma^\prime$ is an orthogonal involution on $A^\prime$ and $\phi^\prime$ is an isomorphism from the center of the Clifford algebra of $A^\prime$ to the center of the Clifford algebra of $A$. For any such triple $(A^\prime, \sigma^\prime, \phi^\prime)$, $\delta(A^\prime, \sigma^\prime, \phi^\prime) = [A^\prime][A]^{-1}$ which is 2-torsion in the Brauer group since both $A$ and $A^\prime$ admit involutions of the first kind. Then, since $[L:k]$ is odd, $h$ is injective on the image of $\delta$ in $H^2(k,G_m)$. 

So choose $(A^\prime, \sigma^\prime, \phi^\prime) \in \ker(g)$. By commutativity of the rightmost rectangle in \eqref{PGO+LES}, $h(\delta(A^\prime, \sigma^\prime, \phi^\prime)) = \point$ and thus $\delta(A^\prime, \sigma^\prime, \phi^\prime) = \point$. Then by the exactness of the top row of the diagram, there is a $\lambda^{\prime} \in H^1(k, GO^+(A, \sigma))$ such that $\eta(\lambda^{\prime}) = (A^\prime, \sigma^\prime, \phi^\prime)$. By commutativity of the left rectangle of \eqref{PGO+LES}, $\eta(f(\lambda^\prime)) = \point$ which by exactness of the bottom row, gives $f(\lambda^\prime) = \point$. Then by \ref{GO+}, $\lambda^{\prime}=\point$ and thus $(A^\prime, \sigma^\prime, \phi^\prime) = \eta(\lambda^\prime) =\point$. 

\end{proof}

\section{Exceptional Groups} \label{EX}

The main result of this section is the following.

\begin{theorem}\label{exceptionalmeta}
Let $k$ be a field of characteristic different from 2. Let $G$ be a quasisplit, absolutely simple exceptional algebraic group over $k$ which is either simply connected or adjoint and is not of type $E_8$. Let $\{L_i\}_{1 \leq i \leq m}$ be a set of finite field extensions of k such that $\gcd([L_i:k])=d$. If $d$ is coprime to $S(G)$, then the canonical map
\[
H^1(k,G) \rightarrow \displaystyle \prod_{i=1}^m H^1(L_i,G)
\]
has trivial kernel.
\end{theorem}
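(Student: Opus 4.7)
The plan is to argue case-by-case on the type of $G$, in each case exploiting the restriction-corestriction principle to annihilate a complete system of cohomological invariants of $\xi$. Fix $\xi$ in the kernel of the displayed map, so $\xi_{L_i}=\point$ for every $i$. For any invariant $a(\xi)$ of $\xi$ taking values in an abelian torsion $\Gamma_k$-module $M$, one has $[L_i:k]\cdot a(\xi)=0$ by $\cor\circ\res$, and hence $d\cdot a(\xi)=0$ after taking a B\'ezout combination of the $[L_i:k]$. Whenever the exponent of $a(\xi)$ has prime factors only in $S(G)$, the coprimality of $d$ with $S(G)$ forces $a(\xi)=0$. The task is then to identify, in each type, enough such invariants to detect triviality.

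For $G=G_2$ the set $H^1(k,G)$ classifies octonion algebras by their $3$-fold Pfister norm forms. Here $S(G)=\{2\}$ makes $d$ odd, so at least one $[L_i:k]$ is odd; Springer's theorem \cite[Chapter VII, Theorem 2.7]{Lam} then says a Pfister form that becomes hyperbolic over that $L_i$ was already hyperbolic, and $\xi=\point$. For split $G$ of type $F_4$, $H^1(k,G)$ classifies Albert algebras, which by the Serre-Rost classification are split precisely when the invariants $f_3\in H^3(k,\mathbb{Z}/2)$ and $f_5\in H^5(k,\mathbb{Z}/2)$ (obtained from the trace form via Springer) and $g_3\in H^3(k,\mathbb{Z}/3)$ (Rost's mod $3$ invariant, \cite{Rostmod3}) all vanish. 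Since $S(G)=\{2,3\}$ and each of these invariants is either $2$- or $3$-torsion, the general principle above kills each of them on $\xi$ and gives $\xi=\point$.

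For simply connected $G$ of quasisplit type ${}^{3,6}D_4$, $E_6$ or $E_7$, Garibaldi's theorem \cite{Skip} says the Rost invariant $R_G\colon H^1(k,G)\to H^3(k,\mathbb{Q}/\mathbb{Z}(2))$ has trivial kernel, and its image is torsion of exponent dividing the Dynkin index $n_G$, whose prime divisors all lie in $S(G)$. The general principle kills $R_G(\xi)$, so $\xi=\point$. For the corresponding adjoint groups I would use the central extension $1\to Z\to G^{sc}\to G\to 1$: the Tits class $\delta(\xi)\in H^2(k,Z)$ is killed by each $[L_i:k]$, and $|Z|$ has prime factors only in $S(G)$, so the same argument yields $\delta(\xi)=0$ and $\xi$ lifts to some $\tilde\xi\in H^1(k,G^{sc})$. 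Over each $L_i$, $\tilde\xi_{L_i}$ lies in the image of $H^1(L_i,Z)\to H^1(L_i,G^{sc})$; since $G^{sc}$ is quasisplit its Tits algebras are split, so the Merkurjev-Garibaldi-Serre formula for the Rost invariant on central classes gives $R_{G^{sc}}\circ\iota=0$, whence $R_{G^{sc}}(\tilde\xi)_{L_i}=0$ for each $i$. Invoking the simply connected case yields $\tilde\xi=\point$, and hence $\xi=\point$.

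The main obstacle I anticipate is the adjoint case: lifting $\xi$ to $\tilde\xi$ only produces a class that becomes \emph{centre-valued}, not necessarily trivial, over each $L_i$, so the simply connected case does not apply to $\tilde\xi$ directly. The crucial input I would need to make the argument go through is the vanishing of $R_{G^{sc}}\circ\iota$ on $H^1(k,Z)$ for quasisplit $G^{sc}$, which encapsulates the assumption that every exceptional factor other than $G_2$ is quasisplit and is what permits the reduction from the adjoint problem to the simply connected one.
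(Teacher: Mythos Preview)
Your proposal is correct and, for $G_2$, split $F_4$, and the simply connected quasisplit types ${}^{3,6}D_4,E_6,E_7$, it is essentially identical to the paper's proof: Springer's theorem on the norm form, the combination of Rost's $g_3$ with the trace-form invariants, and the triviality of the kernel of the Rost invariant (Garibaldi) together with restriction--corestriction.

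The only substantive divergence is in the adjoint case. You lift $\xi$ to $\tilde\xi\in H^1(k,G^{sc})$, observe that $\tilde\xi_{L_i}$ lies in the image of $\iota_{L_i}:H^1(L_i,Z)\to H^1(L_i,G^{sc})$, and then appeal to a formula for $R_{G^{sc}}\circ\iota$ (vanishing because the Tits algebras of a quasisplit group are trivial) to force $R_{G^{sc}}(\tilde\xi_{L_i})=0$; Garibaldi's theorem over $L_i$ then gives $\tilde\xi_{L_i}=\point$, and you finish with the simply connected case. The paper bypasses this detour entirely: since $G^{sc}$ is quasisplit it contains a maximal \emph{quasitrivial} torus $T$ with $Z\subset T\subset G^{sc}$, so the map $\iota$ factors through $H^1(-,T)=0$ and is therefore the zero map over every field. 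Thus $\tilde\xi_{L_i}=\point$ immediately, and the simply connected case applies with no need for the Rost-on-center formula or a second invocation of Garibaldi's theorem over the $L_i$. Your route works, but the quasitrivial-torus observation is both simpler and explains more transparently why the quasisplit hypothesis is exactly what is needed in the adjoint reduction.
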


In what follows, we consider absolutely simple groups of type $G_2$; absolutely simple, split groups of type $F_4$; quasisplit, absolutely simple, simply connected groups of types $^{3,6}D_4, E_6, E_7$ and then quasisiplit, absolutely simple, adjoint groups of types $^{3,6}D_4, E_6, E_7$. 

In fact, our results also hold in other settings. The proof we give for the case $G$ simple, split of type $F_4$ requires only the weaker assumption that $G$ is a simple group of type $F_4$ such that the invariant $g_3(G)=0$ where $g_3$ is an invariant $H^1(k, G) \to H^3(k, \mathbb{Z}/(3))$ defined by Rost \cite{Rostmod3}. Similary, given \cite{Skip} one can prove a similar result for $G$ isotropic of inner type $E_6$ .

Results on the kernel of the Rost invariant will be an important tool in this section. For $G$ absolutely simple, simply connected, the \emph{Rost invariant} $R_G$ is an invariant of $G$ with values in $ H^3(*, \mathbb{Q}/\mathbb{Z}(2))$ ,\cite{SkipMerkSerre}. Notably, the Rost invariant generates the group of all normalized invariants of $G$ with values in $ H^3(*, \mathbb{Q}/\mathbb{Z}(2))$ \cite[Theorem 9.11]{SkipMerkSerre}. 

\subsection*{G absolutely simple of type $G_2$}

\begin{proposition}\label{G2}
Let $k$ be a field of characteristic different from 2. Let $G$ be an absolutely simple group of type $G_2$ over $k$. Let $L$ be any  finite field extension of $k$ of odd degree. Then the canonical map
\[
H^1(k,G) \rightarrow H^1(L,G)
\]
has trivial kernel.
\end{proposition}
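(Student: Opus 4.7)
The plan is to reduce the statement to Springer's theorem via the well-known identification of $G_2$-torsors with octonion algebras. Every absolutely simple $k$-group $G$ of type $G_2$ arises as the automorphism group $\mathrm{Aut}(C_0)$ of some octonion (Cayley) $k$-algebra $C_0$; since $\mathrm{Aut}(C_0)$ is smooth and its $k_s$-forms are precisely the octonion $k_s$-algebras, $H^1(k, G)$ is in natural bijection with the pointed set of isomorphism classes of octonion algebras over $k$, with $C_0$ as the basepoint. This identification is compatible with scalar extension, so a class in $\ker(H^1(k,G) \to H^1(L,G))$ is represented by an octonion $k$-algebra $C$ such that $C \otimes_k L \cong C_0 \otimes_k L$ as octonion algebras over $L$.

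The next step is to translate the problem into one about quadratic forms. An octonion algebra $C$ is determined up to isomorphism by its norm form $n_C$, which is a 3-fold Pfister form, and $C \mapsto n_C$ induces a bijection between isomorphism classes of octonion algebras and isometry classes of 3-fold Pfister forms. Therefore, $C_L \cong (C_0)_L$ is equivalent to $(n_C)_L \cong (n_{C_0})_L$ as quadratic forms over $L$.

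Finally, I apply Springer's theorem. The quadratic form $n_C \perp (-n_{C_0})$ is hyperbolic over $L$ by the previous step, and since $[L:k]$ is odd, Springer's theorem \cite[Chapter VII, Theorem 2.7]{Lam} forces it to be hyperbolic already over $k$. Witt cancellation for quadratic forms then yields $n_C \cong n_{C_0}$ over $k$, so $C \cong C_0$ and the class $[C]$ is the point of $H^1(k,G)$.

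The main obstacle is essentially conceptual rather than technical: one must set up the correct dictionary between $G_2$-torsors, octonion algebras, and 3-fold Pfister forms (the latter being what Springer's theorem can handle directly). Once this translation is in place, the invocation of Springer's theorem is immediate, in keeping with the remark in the introduction that Springer's theorem applied to norm forms of Cayley algebras is the crucial ingredient for $G_2$.
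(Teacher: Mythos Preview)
Your proof is correct and follows essentially the same route as the paper: identify $H^1(k,G)$ with isomorphism classes of octonion (Cayley) algebras, pass to their norm forms (3-fold Pfister forms), and invoke Springer's theorem for odd-degree extensions to conclude. The only cosmetic difference is that you phrase the Springer step via hyperbolicity of $n_C \perp (-n_{C_0})$ followed by Witt cancellation, whereas the paper applies Springer's theorem directly to the isometry $q_{C_L}\cong q_{C'_L}$.
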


\begin{proof}

An absolutely simple group of type $G_2$ is isomorphic to a group of the form $Aut(C)$ where $C$ is a Cayley algebra over $k$. Since every Cayley algebra over $k$ splits over the algebraic closure of $k$, $H^1(k, G)$ is in bijection with the isomorphism classes of Cayley algebras over $k$ \cite[Proposition 29.1]{KMRT}. To any Cayley algebra $C^{\prime}$, we associate its norm form $q_{C^\prime}$ which is a 3-fold Pfister form. Choose $[C^{\prime}] \in ker(H^1(k,G) \rightarrow  H^1(L, G))$. Then $C^{\prime}_L$ is isomorphic to $C_L$ which is the point in $H^1(L, G)$. Since two isomorphic Cayley algebras have isometric norm forms \cite[Theorem 33.19]{KMRT}, $q_{C_L}$ is isometric to $q_{C^{\prime}_L}$. But since $[L:k]$ is odd, by Springer's theorem \cite[Chapter VII, Theorem 2.7]{Lam} ${q_C}$ is isometric to $q_{C^{\prime}}$. Applying \cite[Theorem 33.19]{KMRT} again, we conclude that $C^\prime$ is isomorphic to $C$,and $[C^\prime]$ is the point in $H^1(k,G)$.

\end{proof}

\subsection*{G absolutely simple, split of type $F_4$}

\begin{proposition} \label{F4Dynkin}
Let $k$ be a field of characteristic different from 2. Let $G$ be a split, absolutely simple, simply connected group of type $F_4$. Let $\{L_i\}_{1 \leq i \leq m}$ be a set of finite extensions of $k$ and let $\gcd([L_i:k])=d$. If $d$ is coprime to 2 and 3, then the canonical map
\[
H^1(k,G) \rightarrow \displaystyle \prod_{i=1}^m H^1(L_i,G)
\]
has trivial kernel.
\end{proposition}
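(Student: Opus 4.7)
The plan is to identify $H^1(k,G)$ with the pointed set of isomorphism classes of Albert algebras over $k$, taking the split Albert algebra $J_0$ as basepoint, and then analyze a representative $J$ of a class in the kernel using the Rost mod $3$ invariant $g_3$ of \cite{Rostmod3} together with Springer's theorem for quadratic forms over odd-degree extensions. By hypothesis, $J_{L_i}$ is split for every $i$, and the goal is to conclude that $J$ itself is split.

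First I would establish that $J$ is reduced. The invariant $g_3 \colon H^1(-,G) \to H^3(-,\mathbb{Z}/3\mathbb{Z})$ is functorial, so $g_3(J_{L_i}) = \res_{L_i/k}(g_3(J))$ vanishes for each $i$. Applying $\cor_{L_i/k}$ gives $[L_i:k] \cdot g_3(J) = 0$ for every $i$, and writing $d$ as an integer linear combination of the $[L_i:k]$ produces $d \cdot g_3(J) = 0$. Since $g_3(J)$ is $3$-torsion and $d$ is coprime to $3$, we get $g_3(J) = 0$. By a theorem of Rost (see also Petersson--Racine), this forces $J$ to be reduced.

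Next I would compare trace forms. Because $d$ is coprime to $2$ and $d = \gcd([L_i:k])$, at least one extension $[L_{i_0}:k]$ must be odd; set $L = L_{i_0}$. Let $T_J$ denote the $27$-dimensional trace quadratic form of $J$. Since $J_L \cong (J_0)_L$, we have $T_{J_L} \cong T_{(J_0)_L}$, so Springer's theorem \cite[Chapter VII, Theorem 2.7]{Lam} yields $T_J \cong T_{J_0}$ over $k$. As both $J$ and $J_0$ are reduced with isometric trace forms, the classification of reduced Albert algebras by trace forms (see \cite{KMRT}) gives $J \cong J_0$, whence $[J]$ is the basepoint of $H^1(k,G)$.

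The main obstacle I anticipate lies in the inputs from the theory of exceptional Jordan algebras: the fact that vanishing of $g_3$ characterizes reducedness, and that reduced Albert algebras are classified up to isomorphism by their trace forms. Both are standard but nontrivial results; once they are in hand, the restriction--corestriction and Springer arguments above run formally and do not require any further delicate computation.
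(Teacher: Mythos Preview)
Your proposal is correct and follows essentially the same route as the paper: both use restriction--corestriction on the Rost (mod $3$) invariant to show the Albert algebra $J$ is reduced, then invoke Springer's theorem over an odd-degree $L_i$ on the trace form to conclude $J$ is split. The only cosmetic difference is that the paper works with the Pfister components $\phi_3,\phi_5$ of the trace form $T_J=\qform{2,2,2}\perp\phi_5\perp(-\phi_3)$ and shows these become hyperbolic, whereas you compare the full trace forms directly; both arguments rest on the same two inputs you identify (vanishing of $g_3$ forces reducedness, and reduced Albert algebras are classified by their trace forms).
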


\begin{proof}
We may regard the Rost invariant as a map $H^1(k,G) \to H^3(k, \mathbb{Z}/6\mathbb{Z}(2))$ \cite[pg 436]{KMRT}. The following diagram commutes

\[
\xymatrix{
H^1(k, G) \ar[r]^-{\psi} \ar[d]^-f & H^3(k, \mathbb{Z}/6\mathbb{Z}(2)) \ar[d]^-g\\
\prod H^1(L_i, G) \ar[r]^-{\psi_{L}} & \prod H^3(L_i, \mathbb{Z}/6\mathbb{Z}(2))
}
\]

Choose $J$ in $\ker(f)$. By commutativity of the diagram, $\psi(J)$ is in $\ker(g)$ and by our assumption on $d$, $g$ has trivial kernel. Thus $\psi(J) =0$ and $J$ is reduced \cite{Rostmod3}. Following Serre \cite[\S 9]{Serre} we associate to $J$ a quadratic form $\qform{2, 2, 2} \perp \phi_5 \perp -\phi_3$ which we denote $T_J$ and refer to as the trace form of $J$. Here each $\phi_n$ is an $n$-fold Pfister form. Since $J$ is reduced, it is determined up to isomorphism by the isometry class of $T_J$ \cite[\S 9]{Serre} which by Witt's cancellation is in turn determined by the isometry classes of $\phi_3$ and $\phi_5$. Since by assumption $J$ is split over each $L_i$, $\phi_3$ and $\phi_5$ are hyperbolic over each $L_i$. Since at least one of the $L_i$ is odd degree, by Springer's theorem \cite[Chapter VII, Theorem 2.7]{Lam} $\phi_3$ and $\phi_5$ are hyperbolic over $k$ from whence we have $J$ split over $k$. Thus $J$ is the point in $H^1(k, G)$.

\end{proof}

\subsection*{G quasisplit, absolutely simple, simply connected of type ${}^{3,6}D_4,E_6,E_7$}

\begin{proposition} \label{DE}
Let $k$ be a field and $G$ a quasisplit, absolutely simple, simply connected group of type ${}^{3,6}D_4,E_6,E_7$. Let $\{L_i\}_{1 \leq i \leq m}$ be a set of finite field extensions of $k$ and let $\gcd([L_i:k])=d$. If $d$ is coprime to 2 and 3 then the canonical map
\[
H^1(k,G) \rightarrow \displaystyle \prod_{i=1}^m H^1(L_i,G)
\]
has trivial kernel.
\end{proposition}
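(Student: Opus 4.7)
The plan is to argue via the Rost invariant, leveraging Garibaldi's theorem \cite{Skip} which asserts that for a quasisplit, absolutely simple, simply connected group $G$ of type ${}^{3,6}D_4$, $E_6$, or $E_7$, the Rost invariant
\[
R_G \colon H^1(k, G) \to H^3(k, \mathbb{Q}/\mathbb{Z}(2))
\]
has trivial kernel. This reduces the proposition to verifying that any $\lambda$ in the kernel of $f \colon H^1(k,G) \to \prod_i H^1(L_i, G)$ is annihilated by $R_G$.

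First I would record the commutative square
\[
\xymatrix{
H^1(k,G) \ar[r]^-{R_G} \ar[d]^-f & H^3(k, \mathbb{Q}/\mathbb{Z}(2)) \ar[d]^-g \\
\prod_i H^1(L_i, G) \ar[r]^-{R_G} & \prod_i H^3(L_i, \mathbb{Q}/\mathbb{Z}(2))
}
\]
in which the vertical arrows are (products of) restriction maps; commutativity is naturality of $R_G$. For $\lambda \in \ker(f)$, naturality gives $\res_{L_i/k}(R_G(\lambda)) = R_G(\res_{L_i/k}(\lambda)) = 0$ for every $i$.

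Next I would apply the standard restriction-corestriction identity in the abelian group $H^3(k, \mathbb{Q}/\mathbb{Z}(2))$ to conclude $[L_i:k] \cdot R_G(\lambda) = \cor_{L_i/k}(\res_{L_i/k}(R_G(\lambda))) = 0$ for each $i$. Writing $d = \sum_i n_i [L_i:k]$ via B\'ezout, it follows that $d \cdot R_G(\lambda) = 0$. On the other hand, $R_G(\lambda)$ is itself annihilated by the Dynkin index $n_G$ of $G$, whose prime divisors lie in $S(G) = \{2,3\}$ for each of the three types under consideration, as recalled in Section \ref{SG} (citing \cite{SkipMerkSerre}). Since by hypothesis $d$ is coprime to $S(G)$, we have $\gcd(d, n_G) = 1$, forcing $R_G(\lambda) = 0$.

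Finally, invoking \cite{Skip} yields $\lambda = \point$ in $H^1(k,G)$, completing the argument. The heavy lifting, the triviality of the kernel of the Rost invariant in these quasisplit cases, is entirely supplied by Garibaldi; the remainder of the argument is a short diagram chase combined with a routine restriction-corestriction manipulation. The only conceptual obstacle is thus packaged in the citation of \cite{Skip}: once this input is available, bounding the order of $R_G(\lambda)$ simultaneously by $d$ and by $n_G$ makes the vanishing immediate.
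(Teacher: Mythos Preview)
Your proof is correct and follows essentially the same approach as the paper's: set up the commutative square with the Rost invariant, use restriction--corestriction together with the fact that the prime divisors of $n_G$ lie in $S(G)=\{2,3\}$ to conclude $R_G(\lambda)=0$, then invoke the triviality of the kernel of $R_G$ in the quasisplit case. The only discrepancy is in attribution: the paper cites \cite{Skip} for the $E_6$ and $E_7$ cases but credits the ${}^{3,6}D_4$ case to results in \cite{KMRT} with details supplied separately by Garibaldi \cite{Skip2}, whereas you bundle all three types under \cite{Skip}.
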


\begin{proof}
For $G$ absolutely simple, simply connected, the Rost invariant $R_G$ takes values in $(\mathbb{Z}/n_G\mathbb{Z})(2)$ where $n_G$ is the Dynkin index of $G$ \cite[pg 436]{KMRT}.

The following diagram commutes. 
\begin{equation}\label{rost}
\xymatrix{
H^1(k,G) \ar[r]^-{R_G} \ar[d]^-f &H^3(k, (\mathbb{Z}/n_G\mathbb{Z})(2)) \ar[d]^-g \\
\displaystyle \prod H^1(L_i, G) \ar[r]^-{R_G} &\displaystyle \prod H^3(L_i, (\mathbb{Z}/n_G\mathbb{Z})(2))}
\end{equation}

Choose $\lambda \in \ker(f)$. Let $R_G(\lambda) = \lambda^{\prime}$. Since $S(G)$ contains the prime divisors of $n_G$, and we have assumed $d$ coprime to $S(G)$, a restriction-corestriction argument gives that   $g$ has trivial kernel. So by commutativity of \eqref{rost}, $\lambda$ is in $\text{ker}(R_G)$. By a theorem of Garibaldi, \cite{Skip} $R_G$ has trivial kernel for $G$ of type $E_6, E_7$.That $R_G$ has trivial kernel for $G$ of type $^{3,6} D_4$ follows from results in \cite{KMRT}. Garibaldi has provided the details of the argument \cite{Skip2}. So we conclude that $\lambda$ = point.

\end{proof}

\subsection*{G quasisplit, absolutely simple, adjoint of type ${}^{3,6}D_4,E_6,E_7$}

The proof in this case is a consequence of the result in the simply connected case \ref{DE}. 

\begin{proposition} \label{DEad}
Let $k$ be a field of characteristic different from 2 and 3 and let $G$ be a quasisplit, absolutely simple, adjoint group of type ${}^{3,6}D_4, E_6, E_7$. Let $\{L_i\}$ be a set of finite field extensions of $k$ and let $\gcd([L_i:k])=d$. If $d$ is coprime to 2 and 3, then the canonical map
\[
H^1(k,G) \rightarrow \displaystyle \prod_{i=1}^m H^1(L_i,G)
\]
has trivial kernel.
\end{proposition}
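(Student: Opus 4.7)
The plan is to reduce to the simply connected case \ref{DE} by lifting along the central isogeny. Let $\tilde G$ denote the simply connected cover of $G$ and $Z=\ker(\tilde G\to G)$, giving the central short exact sequence $1\to Z\to \tilde G\to G\to 1$. In the three cases ${}^{3,6}D_4$, $E_6$, $E_7$, the group scheme $Z$ is finite commutative of order dividing $4$, $3$, or $2$ respectively, so the prime divisors of $|Z|$ lie in $\{2,3\}\subseteq S(G)$ and are coprime to $d$. The resulting long exact sequence in Galois cohomology gives a commutative diagram comparing $H^i(k,\cdot)$ with $\prod H^i(L_i,\cdot)$ for $i=1,2$.

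Given $\lambda$ in the kernel of $H^1(k,G)\to\prod H^1(L_i,G)$, commutativity of the connecting map $\delta$ with restriction shows $\delta(\lambda)_{L_i}=0$ in $H^2(L_i,Z)$ for each $i$. Since $H^2(k,Z)$ is $|Z|$-torsion and $|Z|$ is coprime to $d$, a restriction--corestriction argument yields $\delta(\lambda)=0$, so $\lambda$ lifts to some $\tilde\lambda\in H^1(k,\tilde G)$ with $\pi^*(\tilde\lambda)=\lambda$.

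One would like to invoke \ref{DE} on $\tilde\lambda$ directly, but a priori one knows only that $\tilde\lambda_{L_i}$ lies in the image of $H^1(L_i,Z)\to H^1(L_i,\tilde G)$ for each $i$, not that it is trivial. To handle this I would compute the Rost invariant $R_{\tilde G}(\tilde\lambda)\in H^3(k,\mathbb{Z}/n_{\tilde G}\mathbb{Z}(2))$ and mimic the argument of \ref{DE}. The quasisplit hypothesis enters here: the composite $H^1(L_i,Z)\to H^1(L_i,\tilde G)\xrightarrow{R_{\tilde G}} H^3(L_i,\mathbb{Z}/n_{\tilde G}\mathbb{Z}(2))$ is given by cup product with the Tits class of $\tilde G$ (a formula due to Merkurjev and Garibaldi), and this Tits class vanishes since $\tilde G$ is quasisplit. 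Consequently $R_{\tilde G}(\tilde\lambda)_{L_i}=0$ for every $i$, and restriction--corestriction on $H^3(k,\mathbb{Z}/n_{\tilde G}\mathbb{Z}(2))$ (exactly as in \ref{DE}, using that the prime factors of $n_{\tilde G}$ lie in $S(G)$ and are coprime to $d$) gives $R_{\tilde G}(\tilde\lambda)=0$. By the triviality of the Rost kernel for quasisplit simply connected $\tilde G$ of type ${}^{3,6}D_4, E_6, E_7$ \cite{Skip}, we conclude $\tilde\lambda=0$, and hence $\lambda=\pi^*(\tilde\lambda)=\point$.

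The main obstacle is the identification of the Rost invariant on the image of $H^1(L_i,Z)\to H^1(L_i,\tilde G)$ with cup product with the Tits class of $\tilde G$, together with the use of the quasisplit hypothesis to force that class to vanish. Everything else is the same restriction--corestriction pattern already deployed in the simply connected proofs of \ref{DE}.
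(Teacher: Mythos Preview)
Your argument is correct, but the paper resolves the obstacle you identify in a simpler and more structural way. After lifting $\lambda$ to $\tilde\lambda\in H^1(k,\tilde G)$ exactly as you do, the paper observes that because $\tilde G$ is quasisplit it contains a maximal \emph{quasitrivial} torus $T$, and the center $Z$ sits inside $T$. Hence the map $H^1(F,Z)\to H^1(F,\tilde G)$ factors through $H^1(F,T)=1$ for any field $F$, so its image is already trivial. In particular the bottom row of the comparison diagram begins with $1\to\prod H^1(L_i,\tilde G)\to\prod H^1(L_i,G)$, and $\pi(f(\tilde\lambda))=\point$ forces $f(\tilde\lambda)=\point$ outright; one can then invoke \ref{DE} directly, with no need to analyze the Rost invariant again.

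Your route instead handles the ambiguity by computing $R_{\tilde G}$ on the image of $H^1(L_i,Z)$ via the Tits class formula and using that the Tits class of a quasisplit group vanishes. This is valid, but it imports an extra nontrivial input (the Merkurjev--Garibaldi identification of $R_{\tilde G}$ restricted to $H^1(\cdot,Z)$) and essentially reproves \ref{DE} inside the argument rather than reducing to it. The quasitrivial-torus trick is both shorter and more robust: it uses only Hilbert~90 and the definition of quasisplit, and it makes transparent exactly where the quasisplit hypothesis is consumed.
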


\begin{proof}
We have a short exact sequence

\begin{equation} \label{DEASES}
\xymatrix{
1 \ar[r] &\mu \ar[r] &G^{sc} \ar[r]^-\pi &G \ar[r] &1}
\end{equation}

\noindent
where $G^{sc}$ is a simply connected cover of $G$ and $\mu$ is its center. Since $G$ is by assumption quasisplit, then $G^{sc}$ is quasisplit. So let $T$ be the maximal, quasitrivial torus in $G^{sc}$. 

As $\mu \subset T \subset G^{sc}$, the map $H^1(k, \mu) \to H^1(k, G^{sc})$ induced by the inclusion of $\mu$ in $G^{sc}$ factors through the map $H^1(k,\mu) \to H^1(k,T)$ induced by the inclusion of $\mu$ in $T$. But since $T$ is quasitrivial, $H^1(k,T)$ is trivial, and thus the image of the map $H^1(k, \mu) \to H^1(k,G^{sc})$ is trivial. Given this result,  \eqref{DEASES} induces the following commutative diagram with exact rows.

\begin{equation} \label{DEALES}
\xymatrix{
1 \ar[r] &H^1(k, G^{sc}) \ar[r]^-{\pi} \ar[d]^-f & H^1(k, G) \ar[d]^-g \ar[r]^-\delta & H^2(k, \mu) \ar[d]^-h\\
1 \ar[r] & \prod H^1(L_i, G^{sc}) \ar[r]^-\pi &\prod H^1(L_i, G) \ar[r]^-\delta &\prod H^2(L_i, \mu)
}
\end{equation}

Choose $\lambda \in \text{ker}(g)$. The prime divisors of the order of  $\mu$ are contained in $S(G)$. Then since $d$ is coprime to $S(G)$, $d$ is coprime to the order of $\mu$ and a restriction-corestriction argument gives that $h$ has trivial kernel. So by commutativity of the rightmost rectangle of \eqref{DEALES}, $\lambda \in \text{ker}(\delta)$. By exactness of the top row of \eqref{DEALES} choose $\lambda^{\prime} \in H^1(k, G^{sc})$ such that $\pi(\lambda^\prime) = \lambda$. Commutativity of the left rectangle of \eqref{DEALES} gives $f(\lambda^\prime) \in \text{ker}(\pi)$ which is trivial by the exactness of the bottom row of \eqref{DEALES}. So $f(\lambda^\prime) =$ point, from whence by \ref{DE}, $\lambda^{\prime}$ is the point in $H^1(k,G^{sc})$. It is then immediate that $\lambda = \pi(\lambda^{\prime})$ is the the point in $H^1(k, G)$.
\end{proof}

\begin{remark}
One can avoid restrictions on the characteristic $k$ in \ref{DEad} by giving a proof in the flat cohomology sets $H^1_{\fppf}(*,*)$ as defined in \cite{Waterhouse}. Since $G$ is by assumption smooth, $H^1_{\fppf}(k,G)= H^1(k,G)$. 
\end{remark}

\section{Main Result} \label{ME}

\begin{theorem} \label{bigmeta}
Let $k$ be a field of characteristic different from 2. Let $\{L_i\}_{1 \leq i \leq m}$ be a set of finite field extensions of $k$ and let $\gcd([L_i:k])=d$. Let $G$ be a simply connected or adjoint semisimple algebraic $k$-group which does not contain a simple factor of type $E_8$ and such that every exceptional simple factor of type other than $G_2$ is quasisplit. If $d$ is coprime to $S(G)$, then the canonical map
\[
H^1(k,G) \to \prod_{i=1}^m H^1(L_i,G)
\]
has trivial kernel.
\end{theorem}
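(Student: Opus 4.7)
The plan is to reduce Theorem \ref{bigmeta} to the absolutely simple case treated in Sections \ref{SC}, \ref{AD} and \ref{EX}. By the structure theorem \cite[Theorem 26.8]{KMRT} recalled at the start of Section \ref{SG}, I may write
\[
G = \prod_{j} R_{E_j/k}(G_j)
\]
with each $E_j/k$ a finite separable extension and each $G_j$ an absolutely simple $E_j$-group, simply connected or adjoint according to $G$. The hypotheses on $G$ carry over to the factors: no $G_j$ is of type $E_8$, and every exceptional $G_j$ of type other than $G_2$ is quasisplit. Shapiro's lemma gives $H^1(k, R_{E_j/k}(G_j)) = H^1(E_j, G_j)$, and for each $i$, decomposing the \'etale algebra $L_i \otimes_k E_j = \prod_\ell F_{i,j,\ell}$ as a product of fields yields $H^1(L_i, R_{E_j/k}(G_j)) = \prod_\ell H^1(F_{i,j,\ell}, G_j)$. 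The canonical map in the theorem is therefore a direct product over $j$, and its kernel is trivial if and only if, for each $j$, the map
\[
H^1(E_j, G_j) \longrightarrow \prod_{i,\ell} H^1(F_{i,j,\ell}, G_j)
\]
has trivial kernel.

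Next I check that $d_j := \gcd_{i,\ell}[F_{i,j,\ell}:E_j]$ is coprime to $S(G_j)$. Since $S(G_j) \subseteq S(G)$, it is enough to show that any prime $p \in S(G)$ fails to divide $d_j$. By hypothesis $p \nmid d = \gcd_i [L_i:k]$, so $p \nmid [L_i:k]$ for some $i$; since $[L_i:k] = \sum_\ell [F_{i,j,\ell}:E_j]$, at least one summand escapes divisibility by $p$, hence $p \nmid d_j$.

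With the reduction in place, I apply the appropriate earlier result to each $j$: Theorem \ref{scmeta} if $G_j$ is a simply connected classical group and Theorem \ref{admeta} if $G_j$ is adjoint classical; Proposition \ref{G2} if $G_j$ is of type $G_2$, used with a single $F_{i,j,\ell}$ of odd degree over $E_j$ (such an $F_{i,j,\ell}$ exists because $S(G_2) = \{2\}$ forces $d_j$ to be odd, so at least one of the summands $[F_{i,j,\ell}:E_j]$ is odd); and Theorem \ref{exceptionalmeta} for the remaining quasisplit exceptional types $F_4$, ${}^{3,6}D_4$, $E_6$, $E_7$. The genuinely difficult work has been done in Sections \ref{SC}--\ref{EX}; the only step at this stage is the formal bookkeeping of the Weil-restriction/Shapiro reduction together with the verification that coprimality with $S(G)$ is preserved under extension of scalars from $k$ to each $E_j$, which is the main (and only modest) obstacle.
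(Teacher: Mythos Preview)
Your proof is correct and follows essentially the same route as the paper: decompose $G$ into Weil restrictions of absolutely simple factors, apply Shapiro's lemma together with the factorization $L_i \otimes_k E_j \cong \prod_\ell F_{i,j,\ell}$, verify via the identity $[L_i:k]=\sum_\ell [F_{i,j,\ell}:E_j]$ that the new gcd is again coprime to $S(G_j)$, and then invoke the results of Sections \ref{SC}--\ref{EX}. The paper phrases the coprimality step as ``$d'$ divides $d$'' rather than your prime-by-prime argument, and it cites Theorem \ref{exceptionalmeta} uniformly for all exceptional factors (including $G_2$) rather than singling out Proposition \ref{G2}, but these are cosmetic differences.
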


\begin{proof}

Write $G$ as a product of groups of the form $R_{E_j/k}G_j$ where each $G_j$ is an absolutely simple, simply connected or adjoint group and each $E_j$ is a finite separable field extension of $k$. It is sufficient to consider a group of the form $R_{E/k}G$ for an absolutely simple group $G$ and a finite separable field extension $E$ of $k$. By Shapiro's Lemma, $H^1(k, R_{E/k}G) \cong H^1(E,G)$ and $\prod_i H^1(L_i, R_{E/k}G) \cong \prod_i H^1_{\et}(L_i \otimes E, G)$ where the subscript $\et$ denotes the \'etale cohomology as in \cite{Milne}. 

Since $E$ is separable, for each index $i$, $E \otimes L_i \cong \prod_s L_{i,s}$ for $L_{i, s}$ finite extensions of $E$ and therefore $H^1_{\et}(L_i \otimes E, G) \cong \prod_s H^1(L_{i,s}, G)$. Let $d^\prime$ be the greatest common divisor of $\sum_{i,s}[L_{i,s}:k]$. Since for each $i$, $\sum_s[L_{i,s}:k]=[L_i:k]$, then $d^\prime$ divides $d$ and thus $d^\prime$ is coprime to $S(G)$. Thus the map $H^1(E,G) \to \prod_i \prod_s H^1_{\et}(L_{i,s}, G)$ has trivial kernel in view of \ref{scmeta}, \ref{admeta} and \ref{exceptionalmeta} above and thus the map $H^1(k,G) \to \prod_{i=1}^m H^1(L_i,G)$ has trivial kernel. 

\end{proof}

\bibliographystyle{amsplain}
\bibliography{paper1d}

\def\cftil#1{\ifmmode\setbox7\hbox{$\accent"5E#1$}\else
  \setbox7\hbox{\accent"5E#1}\penalty 10000\relax\fi\raise 1\ht7
  \hbox{\lower1.15ex\hbox to 1\wd7{\hss\accent"7E\hss}}\penalty 10000
  \hskip-1\wd7\penalty 10000\box7} \def\cprime{$'$}
\providecommand{\bysame}{\leavevmode\hbox to3em{\hrulefill}\thinspace}
\providecommand{\MR}{\relax\ifhmode\unskip\space\fi MR }
\providecommand{\MRhref}[2]{%
  \href{http://www.ams.org/mathscinet-getitem?mr=#1}{#2}
}
\providecommand{\href}[2]{#2}
\begin{thebibliography}{10}

\bibitem{BayerLenstra}
E.~Bayer-Fluckiger and H.~W. Lenstra, Jr., \emph{Forms in odd degree extensions
  and self-dual normal bases}, Amer. J. Math. \textbf{112} (1990), no.~3,
  359--373. \MR{MR1055648 (91h:11030)}

\bibitem{BayerParimala}
E.~Bayer-Fluckiger and R.~Parimala, \emph{Galois cohomology of the classical
  groups over fields of cohomological dimension {$\leq 2$}}, Invent. Math.
  \textbf{122} (1995), no.~2, 195--229. \MR{MR1358975 (96i:11042)}

\bibitem{BayerPariHasse}
\bysame, \emph{Classical groups and the {H}asse principle}, Ann. of Math. (2)
  \textbf{147} (1998), no.~3, 651--693. \MR{MR1637659 (99g:11055)}

\bibitem{Florence}
Mathieu Florence, \emph{Z\'ero-cycles de degr\'e un sur les espaces
  homog\`enes}, Int. Math. Res. Not. (2004), no.~54, 2897--2914. \MR{2097288
  (2005i:14057)}

\bibitem{Skip2}
Ryan~Skip Garibaldi, \emph{The rost invariant has zero kernel for quasi-split
  trialitarian groups}, http://arxiv.org/pdf/1004.0663v1.

\bibitem{Skip}
\bysame, \emph{The {R}ost invariant has trivial kernel for quasi-split groups
  of low rank}, Comment. Math. Helv. \textbf{76} (2001), no.~4, 684--711.
  \MR{MR1881703 (2003g:20079)}

\bibitem{SkipMerkSerre}
Skip Garibaldi, Alexander Merkurjev, and Serre Jean-Pierre, \emph{Cohomological
  invariants in {G}alois cohomology}, University Lecture Series, vol.~28,
  American Mathematical Society, Providence, RI, 2003. \MR{MR1999383
  (2004f:11034)}

\bibitem{GilleNP}
Philippe Gille, \emph{{$R$}-\'equivalence et principe de norme en cohomologie
  galoisienne}, C. R. Acad. Sci. Paris S\'er. I Math. \textbf{316} (1993),
  no.~4, 315--320. \MR{MR1204296 (94a:12010)}

\bibitem{GilleSzam}
Philippe Gille and Tam{\'a}s Szamuely, \emph{Central simple algebras and
  {G}alois cohomology}, Cambridge Studies in Advanced Mathematics, vol. 101,
  Cambridge University Press, Cambridge, 2006. \MR{MR2266528 (2007k:16033)}

\bibitem{HKRT}
Darrell~E. Haile, Max-Albert Knus, Markus Rost, and Jean-Pierre Tignol,
  \emph{Algebras of odd degree with involution, trace forms and dihedral
  extensions}, Israel J. Math. \textbf{96} (1996), no.~, part B, 299--340.
  \MR{1433693 (98h:16024)}

\bibitem{Kneser}
M.~Kneser, \emph{Lectures on {G}alois cohomology of classical groups}, Tata
  Institute of Fundamental Research, Bombay, 1969, With an appendix by T. A.
  Springer, Notes by P. Jothilingam, Tata Institute of Fundamental Research
  Lectures on Mathematics, No. 47. \MR{MR0340440 (49 \#5195)}

\bibitem{KMRT}
Max-Albert Knus, Alexander Merkurjev, Markus Rost, and Jean-Pierre Tignol,
  \emph{The book of involutions}, American Mathematical Society Colloquium
  Publications, vol.~44, American Mathematical Society, Providence, RI, 1998,
  With a preface in French by J. Tits. \MR{MR1632779 (2000a:16031)}

\bibitem{Lam}
T.~Y. Lam, \emph{Introduction to quadratic forms over fields}, Graduate Studies
  in Mathematics, vol.~67, American Mathematical Society, Providence, RI, 2005.
  \MR{MR2104929 (2005h:11075)}

\bibitem{MerkurjevNP}
A.~S. Merkur{\cprime}ev, \emph{The norm principle for algebraic groups},
  Algebra i Analiz \textbf{7} (1995), no.~2, 77--105. \MR{MR1347513
  (96k:20088)}

\bibitem{Milne}
James~S. Milne, \emph{\'{E}tale cohomology}, Princeton Mathematical Series,
  vol.~33, Princeton University Press, Princeton, N.J., 1980. \MR{559531
  (81j:14002)}

\bibitem{Parihomog}
R.~Parimala, \emph{Homogeneous varieties---zero-cycles of degree one versus
  rational points}, Asian J. Math. \textbf{9} (2005), no.~2, 251--256.
  \MR{2176607 (2006m:14068)}

\bibitem{Zinovy}
Zinovy Reichstein, \emph{Essential dimension}, (to appear) Proceedings of the
  International Congress of Mathematicians, Hyderabad India 2010.

\bibitem{Rostmod3}
Markus Rost, \emph{A {$({\rm mod}\; 3)$} invariant for exceptional {J}ordan
  algebras}, C. R. Acad. Sci. Paris S\'er. I Math. \textbf{313} (1991), no.~12,
  823--827. \MR{MR1138557 (92j:19002)}

\bibitem{Scharlau}
Winfried Scharlau, \emph{Quadratic and {H}ermitian forms}, Grundlehren der
  Mathematischen Wissenschaften [Fundamental Principles of Mathematical
  Sciences], vol. 270, Springer-Verlag, Berlin, 1985. \MR{MR770063 (86k:11022)}

\bibitem{Serre}
Jean-Pierre Serre, \emph{Cohomologie galoisienne: progr\`es et probl\`emes},
  Ast\'erisque (1995), no.~227, Exp.\ No.\ 783, 4, 229--257, S{\'e}minaire
  Bourbaki, Vol. 1993/94. \MR{MR1321649 (97d:11063)}

\bibitem{SerreGC}
\bysame, \emph{Galois cohomology}, english ed., Springer Monographs in
  Mathematics, Springer-Verlag, Berlin, 2002, Translated from the French by
  Patrick Ion and revised by the author. \MR{MR1867431 (2002i:12004)}

\bibitem{Waterhouse}
William~C. Waterhouse, \emph{Introduction to affine group schemes}, Graduate
  Texts in Mathematics, vol.~66, Springer-Verlag, New York, 1979. \MR{547117
  (82e:14003)}

\end{thebibliography}
\end{document}